\newtheorem{theorem}{Theorem}[section]
\newtheorem{corollary}[theorem]{Corollary}
\newtheorem{lemma}[theorem]{Lemma}
\newtheorem{proposition}[theorem]{Proposition}
\newtheorem{remark}[theorem]{Remark}
\newtheorem{example}[theorem]{Example}
\begin{document}

\title[Polynomial skew products]
{Polynomial skew products whose Julia sets have infinitely many symmetries}
\author[Kohei Ueno]{Kohei Ueno}
\address{Daido University, Nagoya 457-8530, Japan}
\curraddr{}
\email{k-ueno@daido-it.ac.jp}
\urladdr{}
\dedicatory{}
\date{}
\thanks{}
\translator{}
\subjclass[2010]{Primary 32H50; Secondary 37C80} 

\begin{abstract}
We consider the symmetries of Julia sets of
polynomial skew products on $\mathbb{C}^2$, 
which are birationally conjugate to rotational products. 
Our main results give the classification of 
the polynomial skew products 
whose Julia sets have infinitely many symmetries.
\end{abstract}

\maketitle

\section{Introduction}

Any kind of Julia sets of a polynomial map can have symmetries. 
We say that a Julia set has symmetries 
if some nonelementary transformations preserve it.
Beardon~\cite{b} investigated the symmetries of 
the Julia sets of polynomials on $\mathbb{C}$. 
He considered conformal functions as symmetries,
and proved that the group of symmetries is infinite
if and only if the polynomial is conjugate to $z \to z^{\delta}$. 
To generalize the results in one-dimension 
to those in higher dimensions, 
we~\cite{u} previously investigated 
the symmetries of Julia sets of 
nondegenerate polynomial skew products on $\mathbb{C}^2$. 
We defined the Julia sets as the supports of the Green measures, 
which are compact, 
and considered suitable polynomial automorphisms as symmetries. 
In this paper, 
we investigate the symmetries of Julia sets of 
polynomial skew products on $\mathbb{C}^2$.
We define the Julia sets by fiberwise Green functions, 
which are close to the supports of the Green measures.
However, the Julia sets may no longer be compact,
and the study of the dynamics becomes more difficult. 
We obtain similar results to the nondegenerate case,
with some assumptions in several statements,
and succeed in classifying the polynomial skew products 
whose Julia sets have infinitely many symmetries.

A polynomial skew product on $\mathbb{C}^2$ is 
a polynomial map of the form $f(z,w) = (p(z),q(z,w))$.
More precisely, 
let $p(z) = a_{\delta} z^{\delta} + O(z^{\delta - 1})$
and $q(z,w) = b_{d}(z) w^{d} + O_z (w^{d-1})$,
where $\delta \geq 2$ and $d \geq 2$.

Our paper is organized as follows.
In Section 2, 
we briefly recall the dynamics of polynomials
and the relevant results on the symmetries 
of Julia sets. 
In Section 3,
we recall the dynamics of polynomial skew products.
In particular,
we review the existence of 
the fiberwise Green and B\"{o}ttcher functions,
and give the definition of Julia sets. 
The study of the symmetries of Julia sets 
begins in Section 4.
First,
we define the centroids of $f$ as defined in \cite{b} and \cite{u},
and show that the symmetries of the Julia set of $f$ are 
birationally conjugate to rotational products.
The tools for the proof are 
the fiberwise Green and B\"{o}ttcher functions of $f$,
which also relate to the centroids of $f$.
Next, 
under some assumptions,
we characterize the group of symmetries 
by functional equations including the iterates of $f$.
The assumptions are, for example,
the normality of $f$ and 
the special form of the polynomial $b_d$.
The normality of $f$
means that the centroids are at the origin.
Finally,
we classify the polynomial skew products 
whose Julia sets have infinitely many symmetries 
in Section 5.
We have two main theorems based on two cases: 
that when the map is in normal form and the general case.  
We find that these maps can be classified into four types. 

\section{Symmetries of Julia sets of polynomials}

In this section,
we recall the dynamics of polynomials on $\mathbb{C}$ and 
the relevant results on the symmetries of the Julia sets of polynomials.

Let 
$p(z) = a_{\delta} z^{\delta} + a_{\delta-1} z^{\delta-1} + \cdots + a_0$
be a polynomial of degree $\delta \geq 2$.
We denote by $p_2 p_1$ the composition of polynomials $p_1$ and $p_2$: 
$p_2 p_1 (z) = p_2 (p_1 (z))$.
Let $p^n$ be the $n$-th iterate of $p$.
A useful tool for the study of the dynamics of $p$ is 
the Green function $G_p$ of $p$,
\[
G_p(z) = \lim_{n \to \infty} {\delta}^{-n} \log^+ |p^n(z)|.
\]
It is well known that $G_p$ is a nonnegative, 
continuous and subharmonic function on $\mathbb{C}$.
By definition, $G_p(p(z)) = \delta G_p(z)$.
Moreover,
$G_p$ is harmonic on $\mathbb{C} \setminus K_p$ and zero on $K_p$, 
where $K_p = \{ z : \{ p^n(z) \}_{n \geq 1} \text{ bounded} \}$, and 
$G_p(z) = \log |z| + (\delta - 1)^{-1} \log |a_{\delta}| + o(1)$ 
as $z \to \infty$.
This is the Green function for $K_p$ with a pole at infinity,
determined only by the the compact set $K_p$.
This function induces the B\"{o}ttcher function $\varphi_p$ 
defined near infinity such that 
$\varphi_p (z) = z + O (1) $  as  $z \to \infty$, 
$\log |c \varphi_p (z)| = G_p(z)$, 
where $c = \sqrt[\delta - 1]{a_{\delta}}$, and
$\varphi_{p} ( p(z) ) = a_{\delta} ( \varphi_p (z) )^{\delta}$.

Let us recall some objects and results of 
the symmetries of the Julia sets of polynomials.  
For further details, see \cite{b}. 
We define the Julia set $J_p$ of $p$ as the boundary $\partial K_p$,
and consider conformal functions 
as the symmetries of $J_p$.
Since $J_p$ is compact,
such functions are Euclidean isometries. 
Thus the group of the symmetries of $J_p$ is defined by
\[
\Sigma_p
= \{ \sigma \in E : \sigma (J_p) = J_p \},
\]
where 
$E = \{ \sigma (z) = c_1 z + c_2 : |c_1|=1, c_1, c_2 \in \mathbb{C} \}$.
The centroid of $p$ is defined by
\[
\zeta = \frac{-a_{\delta -1}}{\delta a_{\delta}}.
\]
Each symmetry $\sigma$ is a rotation about the centroid: 

\begin{proposition}[{\cite[Theorem 5]{b}}]\label{poly, centroid} 
For any symmetry $\sigma$ in $\Sigma_p$, 
there exists $\mu$ in the unit circle $S^1$ such that
$\sigma (z)= \mu (z- \zeta ) + \zeta$.
\end{proposition}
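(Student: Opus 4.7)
The plan is to exploit the fact that a symmetry of $J_p$ is automatically a symmetry of the Green function $G_p$, and then to extract $\zeta$ by comparing sufficiently fine asymptotic expansions at infinity on the two sides of $G_p \circ \sigma = G_p$. Since $\sigma \in E$ has the form $\sigma(z) = c_1 z + c_2$ with $|c_1| = 1$, the task reduces to showing $c_2 = (1 - c_1)\zeta$: then $\mu := c_1$ yields $\sigma(z) = \mu(z - \zeta) + \zeta$.

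The first step is to upgrade $\sigma(J_p) = J_p$ to $G_p \circ \sigma = G_p$. A homeomorphism of $\mathbb{C}$ preserving the compact set $J_p$ must also preserve the unbounded component of $\mathbb{C} \setminus J_p$, hence preserve $K_p$. By the characterization of $G_p$ as the unique nonnegative subharmonic function on $\mathbb{C}$ that vanishes on $K_p$ and has the prescribed logarithmic growth at infinity, one then has $G_p(\sigma(z)) = G_p(z)$ for every $z$.

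The second step is to extract a sharper asymptotic of $G_p$ from the B\"ottcher function. Writing $\varphi_p(z) = z + b_0 + O(1/z)$ and substituting in $\varphi_p(p(z)) = a_\delta \varphi_p(z)^\delta$, matching the $z^{\delta-1}$ coefficient gives $a_{\delta-1} = \delta a_\delta b_0$, i.e.\ $b_0 = -\zeta$. Combined with $\log|c\varphi_p(z)| = G_p(z)$, this refines the usual leading behaviour to
\begin{equation*}
G_p(z) = \log|c| + \log|z - \zeta| + O(1/|z|^2) \qquad (z \to \infty).
\end{equation*}
Setting $A := c_2 - (1 - c_1)\zeta$ so that $\sigma(z) - \zeta = c_1(z - \zeta) + A$, the identity $G_p(\sigma(z)) = G_p(z)$ becomes $\log|c_1 + A/(z - \zeta)| = O(1/|z|^2)$; expanding the logarithm yields
\begin{equation*}
\mathrm{Re}\bigl(\bar c_1 A/(z - \zeta)\bigr) = O(1/|z|^2) \qquad (z \to \infty),
\end{equation*}
and letting $z - \zeta$ tend to infinity along independent directions forces $\bar c_1 A = 0$, hence $A = 0$ and $c_2 = (1 - c_1)\zeta$.

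The main obstacle is the need for an asymptotic of $G_p$ sharp enough to see $\zeta$: the coarse estimate $G_p(z) = \log|z| + O(1)$ only recaptures $|c_1| = 1$, which is already assumed. The improvement of $\log|z|$ to $\log|z - \zeta|$, supplied by the constant term $b_0 = -\zeta$ of the B\"ottcher series, is exactly what pins down the center of the rotation.
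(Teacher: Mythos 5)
Your proof is correct. The paper itself gives no proof of this proposition (it is quoted from Beardon), but your route---upgrading $\sigma(J_p)=J_p$ to $G_p\circ\sigma=G_p$ and reading off the centroid from the constant term $b_0=-\zeta$ of the B\"ottcher expansion---is exactly the mechanism the paper attributes to these results and uses for the two-dimensional analogue (Lemma~\ref{Bottcher & centroid} and Proposition~\ref{centroid}); the only cosmetic difference is that the paper passes from the modulus identity to the holomorphic identity $\varphi_p\circ\sigma=\mu\,\varphi_p$ and compares constant terms, whereas you estimate $\mathrm{Re}\bigl(\bar c_1 A/(z-\zeta)\bigr)$ directly.
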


We can characterize $\Sigma_p$ by the unique equation.

\begin{proposition}[{\cite[Lemma 7]{b}}]\label{poly, fun-eq} 
It follows that
$\Sigma_p = \{ \sigma \in E : p \sigma = \sigma^{\delta} p \}$.
\end{proposition}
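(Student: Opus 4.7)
The plan is to use the Böttcher coordinate $\varphi_p$ as a bridge between the geometric condition $\sigma(J_p)=J_p$ and the algebraic equation $p\sigma=\sigma^{\delta}p$.

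For the inclusion $\Sigma_p\subset\{\sigma\in E:p\sigma=\sigma^{\delta}p\}$, I would start with $\sigma(z)=c_1z+c_2$ satisfying $\sigma(J_p)=J_p$. Since $K_p$ is compact with boundary $J_p$ and $\sigma$ is a homeomorphism, $\sigma(K_p)=K_p$. Then $G_p\circ\sigma$ is harmonic off $K_p$, vanishes on $K_p$, and has the form $\log|z|+\text{const}+o(1)$ at infinity because $|\sigma(z)|=|z|+O(1)$ (as $|c_1|=1$). The uniqueness of the Green function with pole at infinity then forces $G_p\circ\sigma=G_p$ on all of $\mathbb{C}$.

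Next I would transfer this identity to $\varphi_p$. From $\log|c\varphi_p|=G_p$, one gets $|\varphi_p\circ\sigma|=|\varphi_p|$ on a neighborhood of infinity, where $\varphi_p$ is holomorphic and nonvanishing, so the ratio $(\varphi_p\circ\sigma)/\varphi_p$ is a unimodular holomorphic function, hence a constant $\mu\in S^1$. Comparing the expansion $\varphi_p(z)=z+O(1)$ on both sides identifies $\mu=c_1$. Iterating yields $\varphi_p\circ\sigma^{\delta}=\mu^{\delta}\varphi_p$, and then Böttcher's equation $\varphi_p\circ p=a_{\delta}\varphi_p^{\delta}$ gives
\[
\varphi_p(p\sigma(z))=a_{\delta}\mu^{\delta}\varphi_p(z)^{\delta}=\mu^{\delta}\varphi_p(p(z))=\varphi_p(\sigma^{\delta}p(z))
\]
near infinity. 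Injectivity of $\varphi_p$ there forces $p\sigma=\sigma^{\delta}p$ in that region, and since both sides are polynomials, the identity extends to $\mathbb{C}$.

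For the reverse inclusion, I would assume $p\sigma=\sigma^{\delta}p$ and induct to obtain $p^n\sigma=\sigma^{\delta^n}p^n$. Each iterate $\sigma^k$ is again an isometry, and a direct computation shows $|\sigma^k(w)|=|w|+O(1)$ uniformly in $k$ (the one edge case $c_1=1$, $c_2\neq 0$ is incompatible with the functional equation by a top-coefficient comparison, so it is ruled out). Therefore $\{p^n(\sigma(z))\}_n$ is bounded precisely when $\{p^n(z)\}_n$ is, giving $\sigma(K_p)=K_p$ and thus $\sigma(J_p)=J_p$. The delicate step of the whole argument is upgrading $G_p\circ\sigma=G_p$ to the complex identity $\varphi_p\circ\sigma=\mu\varphi_p$ with the correct multiplier $\mu=c_1$; once this is in hand, the Böttcher functional equation converts the symmetry into the polynomial identity mechanically.
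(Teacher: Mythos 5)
Your argument is correct, and it is exactly the Green/B\"ottcher-function approach that the paper attributes to Beardon for this result (the paper itself gives no proof, citing \cite[Lemma 7]{b}) and that it replicates in two variables in Propositions \ref{centroid} and \ref{fun-eq, b = z^l}: symmetry of $J_p$ forces $G_p\sigma=G_p$, hence $\varphi_p\sigma=c_1\varphi_p$, and the B\"ottcher equation converts this into $p\sigma=\sigma^{\delta}p$, while the converse follows from $p^n\sigma=\sigma^{\delta^n}p^n$ and the uniform boundedness of the iterates $\sigma^k$ (with the translation case correctly excluded by comparing $z^{\delta-1}$ coefficients). No gaps.
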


By Proposition {\rmfamily \ref{poly, centroid}},
the group $\Sigma_p$ is identified with 
a subgroup of the unit circle $S^1$.
This group is trivial, finite cyclic or infinite. 
We have a sufficient and necessary condition 
for $\Sigma_p$ to be infinite.

\begin{proposition}[{\cite[Lemma 4]{b}}]\label{poly, infinite} 
The group $\Sigma_p$ is infinite if and only if
$p$ is affinely conjugate to $z^{\delta}$,
or equivalently, if $J_p$ is a circle.
In this case, 
$\Sigma_p$ consists of all rotations about $\zeta$.
\end{proposition}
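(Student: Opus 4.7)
My plan is to prove the biconditional by treating the two directions separately and leaning on the two preceding propositions.

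For the easy direction, I would assume $p = \sigma_0^{-1} \circ m \circ \sigma_0$ with $m(z)=z^{\delta}$ and $\sigma_0(z)=\alpha z+\beta$ an affine map. Since $J_m=S^1$, the Julia set $J_p=\sigma_0^{-1}(S^1)$ is a round circle. Expanding $p(z) = \alpha^{-1}(\alpha z+\beta)^{\delta}-\beta/\alpha$ and reading off the coefficients of $z^{\delta}$ and $z^{\delta-1}$ gives $a_{\delta}=\alpha^{\delta-1}$ and $a_{\delta-1}=\delta\alpha^{\delta-2}\beta$, so the centroid $\zeta=-a_{\delta-1}/(\delta a_{\delta})=-\beta/\alpha$ coincides with $\sigma_0^{-1}(0)$, i.e.\ with the center of the circle $J_p$. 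Therefore every Euclidean rotation about $\zeta$ preserves $J_p$ and lies in $\Sigma_p$; conversely, by Proposition~\ref{poly, centroid} every element of $\Sigma_p$ is already such a rotation. Hence $\Sigma_p$ is exactly the set of rotations about $\zeta$, in particular infinite, and $J_p$ is a circle.

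For the harder direction, I would first translate coordinates so that $\zeta=0$; this preserves both the infiniteness of $\Sigma_p$ and the property of being affinely conjugate to $z^{\delta}$. By Proposition~\ref{poly, centroid} every $\sigma\in\Sigma_p$ then has the form $\sigma_\mu(z)=\mu z$, so $\Sigma_p$ is identified with some infinite subgroup $M\subseteq S^1$. Since the only closed subgroups of $S^1$ are the finite cyclic groups and $S^1$ itself, the closure $\overline{M}$ must be all of $S^1$. Now Proposition~\ref{poly, fun-eq} gives $p\sigma_\mu=\sigma_\mu^{\delta}p$ for every $\mu\in M$; as $\sigma_\mu^{\delta}(w)=\mu^{\delta}w$, this reads $p(\mu z)=\mu^{\delta}p(z)$. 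Writing $p(z)=\sum_{k=0}^{\delta}a_k z^k$ and comparing coefficients yields $a_k(\mu^k-\mu^{\delta})=0$ for every $k$ and every $\mu\in M$. By continuity and the density of $M$ in $S^1$, the identity $\mu^k=\mu^{\delta}$ would then have to hold for all $\mu\in S^1$ whenever $a_k\neq 0$, forcing $k=\delta$. Thus $p(z)=a_{\delta}z^{\delta}$, which is affinely conjugate to $z^{\delta}$ via the scaling $z\mapsto\lambda z$ with $\lambda^{\delta-1}=a_{\delta}$.

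The argument is essentially bookkeeping built on top of Propositions~\ref{poly, centroid} and~\ref{poly, fun-eq}; the only structural input is the classification of closed subgroups of $S^1$. I do not foresee a substantive obstacle. The one place requiring care is the easy direction, where I need the affinely conjugating map to genuinely center the resulting circle at the centroid $\zeta$; this is the purpose of the explicit coefficient computation above. Everything else — translating to put $\zeta=0$, density of $M$ in $S^1$, matching coefficients — is routine once the two earlier propositions are invoked.
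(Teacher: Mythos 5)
Your argument is correct. Note that the paper offers no proof of this statement at all --- it is quoted directly from Beardon's paper as \cite[Lemma 4]{b} --- so there is nothing internal to compare against; your derivation from Propositions~\ref{poly, centroid} and~\ref{poly, fun-eq} (centroid computation under affine conjugation for one direction; density, or really just infinitude, of the rotation group $M\subseteq S^1$ forcing $a_k(\mu^{k}-\mu^{\delta})=0$ to kill all lower-order coefficients for the other) is essentially the standard argument of the cited source. The only cosmetic omission is the implication ``$J_p$ a circle $\Rightarrow$ $\Sigma_p$ infinite,'' needed to close the three-way equivalence, but that is immediate since every rotation about the circle's center is a Euclidean isometry preserving $J_p$.
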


We say that $p$ is in normal form 
if $a_{\delta} = 1$ and $a_{\delta - 1} = 0$,
so that the centroid is at the origin.
We may assume that $p$ is in normal form 
without loss of generality
because $p$ is conjugate to a polynomial in normal form. 
With this terminology,
we can restate Proposition {\rmfamily \ref{poly, infinite}} 
as follows.

\begin{proposition} 
\label{poly-normal, infinite} 
Let $p$ be in normal form.
Then, $\Sigma_p$ is infinite if and only if $p(z) = z^{\delta}$,
or equivalently, if $J_p = S^1$. 
In this case, $\Sigma_p \simeq S^1$. 
\end{proposition}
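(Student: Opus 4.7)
The plan is to reduce everything to Proposition~\ref{poly, infinite}. Since $p$ is in normal form, its centroid $\zeta = -a_{\delta-1}/(\delta a_{\delta}) = 0$ sits at the origin, so Proposition~\ref{poly, centroid} tells us that every element of $\Sigma_p$ is a rotation $\sigma(z) = \mu z$ with $\mu \in S^{1}$.

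For the forward implication of the first equivalence, I would assume $\Sigma_p$ is infinite. By Proposition~\ref{poly, infinite}, there exists an affine map $\sigma(z) = \alpha z + \beta$, with $\alpha \neq 0$, such that $\sigma^{-1} p \sigma(z) = z^{\delta}$; solving for $p$ gives
\[
p(z) = \alpha^{1-\delta} (z - \beta)^{\delta} + \beta.
\]
Expanding and reading off the top two coefficients yields $a_{\delta} = \alpha^{1-\delta}$ and $a_{\delta - 1} = -\delta \alpha^{1-\delta} \beta$. The normal-form conditions $a_{\delta} = 1$ and $a_{\delta - 1} = 0$ force $\alpha^{1-\delta} = 1$ and $\beta = 0$, hence $p(z) = z^{\delta}$. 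The converse is immediate, and a direct computation shows $K_{p} = \overline{\mathbb{D}}$ and $J_{p} = S^{1}$, which, combined with Proposition~\ref{poly, infinite}, also gives the equivalence with $J_{p} = S^{1}$.

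For the final claim $\Sigma_p \simeq S^{1}$, note that once $p(z) = z^\delta$ the unit circle $J_p = S^{1}$ is preserved by every rotation $z \mapsto \mu z$ with $|\mu| = 1$, so each such rotation lies in $\Sigma_p$. Combined with the restriction from Proposition~\ref{poly, centroid} that every symmetry is of this form, this identifies $\Sigma_p$ with the full circle group.

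I do not expect any substantive obstacle; the argument is a bookkeeping exercise in the expansion of $\alpha^{1-\delta}(z-\beta)^{\delta} + \beta$. The only subtle point is that $\alpha$ is pinned down only up to a $(\delta-1)$-th root of unity, but this ambiguity is harmless once $\beta = 0$ has been extracted from the vanishing of $a_{\delta - 1}$.
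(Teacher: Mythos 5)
Your argument is correct and follows exactly the route the paper intends: the paper offers no separate proof, presenting the proposition as an immediate restatement of Proposition~\ref{poly, infinite} once $p$ is in normal form, and your computation with $\alpha^{1-\delta}(z-\beta)^{\delta}+\beta$ simply makes that reduction explicit. No gaps.
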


We can completely determine the group $\Sigma_p$
even if it is finite.

\begin{proposition}[{\cite[Theorem 5]{b}}]\label{poly, order} 
Let $p$ be in normal form.
If $\Sigma_p$ is finite, 
then the order of $\Sigma_p$ is equal to 
the largest integer $m$ such that 
$p$ can be written in the form $p(z) = z^r Q(z^m)$
for some polynomial $Q$.
\end{proposition}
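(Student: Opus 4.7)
The plan is to reduce the claim to a gcd computation on the set of exponents with nonzero coefficients of $p$. Since $p$ is in normal form, Proposition~\ref{poly, centroid} guarantees that every $\sigma \in \Sigma_p$ has the form $\sigma(z) = \mu z$ for some $\mu \in S^1$, and Proposition~\ref{poly, fun-eq} then says such a $\sigma$ lies in $\Sigma_p$ exactly when $p(\mu z) = \mu^{\delta} p(z)$. Writing $p(z) = \sum_{k=0}^{\delta} a_k z^k$ and comparing coefficients, I would observe that this functional equation is equivalent to the condition $\mu^{\delta - k} = 1$ for every $k$ in the set $S := \{k : a_k \neq 0\}$.

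Since $\Sigma_p$ is assumed finite, the identification $\sigma \leftrightarrow \mu$ realizes it as a finite subgroup of $S^1$, and any such subgroup is cyclic, consisting precisely of the $m$-th roots of unity, where $m := |\Sigma_p|$. Combining this with the previous characterization of membership forces the equality $m = \gcd\{\delta - k : k \in S\}$, since the set of $\mu$ satisfying $\mu^{\delta - k}=1$ for all $k \in S$ is exactly the group of $\gcd\{\delta-k : k \in S\}$-th roots of unity.

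To finish, I would interpret this gcd combinatorially. A factorization $p(z) = z^r Q(z^N)$ with $Q$ a polynomial and $r \geq 0$ holds if and only if every exponent in $S$ lies in a single residue class modulo $N$; since $\delta \in S$, this is in turn equivalent to $N \mid (\delta - k)$ for every $k \in S$, i.e.\ to $N$ dividing $\gcd\{\delta - k : k \in S\}$. Hence the largest admissible $N$ is exactly this gcd, which equals $|\Sigma_p|$, proving the proposition.

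The main obstacle is essentially bookkeeping: one must cleanly translate the functional equation $p\sigma = \sigma^{\delta} p$ into a divisibility condition on exponents, and then invoke the classification of finite subgroups of $S^1$ as groups of roots of unity. Once these two descriptions of the same gcd are in place, the equality of $|\Sigma_p|$ with the largest $m$ in the factorization $p(z) = z^r Q(z^m)$ is immediate.
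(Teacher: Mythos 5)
Your proof is correct. Note that the paper does not prove this proposition itself --- it quotes it from Beardon's Theorem 5 --- so there is no internal proof to compare against; your argument (use Propositions \ref{poly, centroid} and \ref{poly, fun-eq} to reduce membership in $\Sigma_p$ to the coefficient condition $\mu^{\delta-k}=1$ for every exponent $k$ with $a_k\neq 0$, then identify both $|\Sigma_p|$ and the maximal $m$ admitting a factorization $p(z)=z^rQ(z^m)$ with $\gcd\{\delta-k : a_k\neq 0\}$) is the standard one and is essentially what Beardon does.
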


The tools for the proofs of these facts 
are the Green and B\"{o}ttcher functions of $p$.
We generalize 
Propositions {\rmfamily \ref{poly, centroid}} 
and {\rmfamily \ref{poly, fun-eq}}
in Section 4,
and Propositions {\rmfamily \ref{poly, infinite}}
and {\rmfamily \ref{poly-normal, infinite}}
in Section 5.
We use Proposition {\rmfamily \ref{poly, order}}
to prove the main theorems in Section 5.

\section{Dynamics of polynomial skew products}

In this section, 
we recall the dynamics of polynomial skew products on $\mathbb{C}^2$
and give the definition of Julia sets. 

\subsection{Polynomial skew products}

A polynomial skew product on $\mathbb{C}^2$ is a polynomial map of the form 
$f(z,w) = (p(z),q(z,w))$.
Let
\[ 
\left\{
   \begin{array}{l}
   p(z) = a_{\delta} z^{\delta} + a_{\delta-1} z^{\delta-1} + \cdots + a_0, \\
   q(z,w) = b_{d}(z) w^{d} + b_{d-1}(z) w^{d-1} + \cdot \cdot + b_{0}(z),
   \end{array}
\right.
\]
and let $b_d$ be a polynomial in $z$ of degree $l \geq 0$.
We assume that $\delta \geq 2$ and $d \geq 2$.
As in \cite{u},
we say that $f$ is nondegenerate 
if $b_d$ is a nonzero constant.

Let us briefly recall 
the dynamics of polynomial skew products.
Roughly speaking, 
the dynamics of $f$ consists of 
the dynamics on the base space and on the fibers.
The first component $p$ defines 
the dynamics on the base space $\mathbb{C}$.
Note that $f$ preserves the set of vertical lines in $\mathbb{C}^{2}$.
In this sense, 
we often use the notation $q_z (w)$ instead of $q(z,w)$.
The restriction of $f^n$ to the vertical line $\{ z \} \times \mathbb{C}$
is viewed as the composition of $n$ polynomials on $\mathbb{C}$, 
$q_{p^{n-1}(z)} \cdots q_{p(z)} q_{z}$.
Therefore,
the $n$-th iterate of $f$ is written as follows: 
\[
f^n(z,w) = (p^n(z), Q_z^n(w)),
\]
where $Q_z^n(w) = q_{p^{n-1}(z)} \cdots q_{p(z)} q_z (w)$.

\subsection{Fiberwise Green and B\"{o}ttcher functions} 

We saw that 
the Green function of polynomial $p$ is well defined.
In a similar fashion, 
we define the fiberwise Green function $G_z$ of $f$ as follows:
\[
G_z(w) = \lim_{n \to \infty} d^{-n} \log^+ |Q_z^n(w)|.
\]
From the result of Favre and Guedj in \cite[Theorem 6.1]{fg}, 
it follows that $G_z$ defines 
a local bounded function on $K_p \times \mathbb{C}$
such that  
$G_{p(z)} (q_z(w)) = d G_z(w)$.
However, it is not continuous in general;
if $b_d^{-1} (0) \cap K_p = \emptyset$, 
then it is continuous on $K_p \times \mathbb{C}$.
We define $K_z$ as $\{ w : G_z(w) = 0 \}$,
which is nonempty for any $z$ in $K_p$. 
To describe $G_z$ more precisely, we define 
\[
\Phi (z) = \sum_{n = 0}^{\infty} \dfrac{1}{d^{n+1}} \log |b_d(p^n(z))|.
\]
It belongs to $L^1 (\mu_p)$, 
where $\mu_p$ is the Green measure of $p$.
For fixed $z$ in $K_p \setminus \{ \Phi = - \infty \}$,
the function $G_z$ is nonnegative, 
continuous and subharmonic on $\mathbb{C}$.
More precisely, 
it is harmonic on $\mathbb{C} \setminus K_z$ and 
$G_z(w) = \log |w| + \Phi (z) + o_z(1)$ as $w \to \infty$.
This is the Green function 
for the compact set $K_z$ with a pole at infinity.
Note that $K_p \setminus \{ \Phi = - \infty \}$ is 
forward invariant under $p$. 

The fiberwise Green function $G_z$ induces 
the fiberwise B\"{o}ttcher function $\varphi_z$,
which is useful to investigate the symmetries of Julia sets.

\begin{lemma}\label{Bottcher} 
For every $z$ in $K_p \setminus \{ \Phi = - \infty \}$,
there exists a unique conformal function $\varphi_z$ 
defined near infinity such that
\begin{enumerate} [(i)]
\item ${\varphi}_z (w) = w + O_z (1) $  as  $w \to \infty$, 
\item $\log |c_z \varphi_z (w)| = G_z (w)$, where $c_z = \exp (\Phi (z))$, 
\item $\varphi_{p(z)} ( q_z (w) ) = b_d (z) ( {\varphi_z} (w) )^d$.
\end{enumerate}
\end{lemma}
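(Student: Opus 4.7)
The plan is to construct $\varphi_z$ directly from $G_z$ by taking a harmonic conjugate near infinity, derive (i) and (ii) from the construction, and then deduce (iii) by combining a functional equation for $\Phi$ with the asymptotics built into (i).

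For the construction, fix $z \in K_p \setminus \{\Phi = -\infty\}$. Since $G_z$ is harmonic on $\mathbb{C} \setminus K_z$, it is harmonic on $\{|w|>R_z\}$ for some $R_z$ large enough, and the expansion $G_z(w) = \log|w| + \Phi(z) + o_z(1)$ shows that
\[
u_z(w) := G_z(w) - \log|w| - \Phi(z)
\]
is a bounded harmonic function on that annulus and vanishes at infinity. Under the change of variable $\zeta = 1/w$, removal of the isolated singularity at $\zeta=0$ yields a harmonic extension of $u_z$ across the origin, so $u_z$ admits a single-valued harmonic conjugate $\tilde u_z$, which we normalize to vanish at infinity. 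Setting $\varphi_z(w) := w\exp(u_z(w) + i\tilde u_z(w))$, one obtains a holomorphic function with $\varphi_z(w)/w \to 1$, which gives (i) and conformality near infinity, and with $|\varphi_z(w)| = |w|\exp(u_z(w)) = \exp(G_z(w))/c_z$, which gives (ii). Uniqueness under (i) and (ii) is immediate: two such functions share a nonvanishing modulus, so their ratio is a holomorphic function of unit modulus, hence constant, and (i) forces the constant to be $1$.

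For (iii), a shift of the summation index in the definition of $\Phi$ gives $\Phi(p(z)) = d\Phi(z) - \log|b_d(z)|$, equivalently $c_z^d = |b_d(z)|\,c_{p(z)}$. Combining this with (ii) and $G_{p(z)}(q_z(w)) = d\,G_z(w)$ yields
\[
|c_{p(z)}\,\varphi_{p(z)}(q_z(w))| = \exp(G_z(w))^d = |c_z\varphi_z(w)|^d = c_{p(z)}\,|b_d(z)\varphi_z(w)^d|,
\]
so the ratio $\varphi_{p(z)}(q_z(w))/\bigl(b_d(z)\varphi_z(w)^d\bigr)$ is a holomorphic function of $w$ of unit modulus near infinity, and therefore constant. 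Both sides expand as $b_d(z)w^d + O_z(w^{d-1})$ at infinity, by (i) and $q_z(w) = b_d(z)w^d + O(w^{d-1})$, so the constant equals $1$.

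The main subtlety lies in pinning down the phase in (iii): the modulus identity alone determines $\varphi_{p(z)}(q_z(w))$ only up to a unimodular factor, and one must check that this factor is exactly $b_d(z)/|b_d(z)|$ rather than an arbitrary unit. The normalization in (i) supplies precisely the extra information needed, which is why I extract the constant by comparing leading coefficients at infinity rather than by attempting a coherent fibrewise choice of harmonic conjugate at the outset.
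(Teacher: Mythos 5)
Your proof is correct, and it is the standard fiberwise Böttcher construction that the paper implicitly invokes: the lemma is stated without proof, being the fiberwise analogue of the one-variable facts recalled in Section 2, and your route (harmonic conjugate of $G_z(w)-\log|w|-\Phi(z)$ across infinity for existence, equality of moduli plus the normalization in (i) for uniqueness, and the relation $\Phi(p(z))=d\,\Phi(z)-\log|b_d(z)|$ together with a comparison of leading coefficients for (iii)) is exactly that argument. The only points left tacit, both already noted in the paper, are that $K_p\setminus\{\Phi=-\infty\}$ is forward invariant under $p$ (so that $\varphi_{p(z)}$ exists) and that $b_d(z)\neq 0$ there (so that division by $b_d(z)\varphi_z(w)^d$ is legitimate).
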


\subsection{Julia sets}

In this paper, we consider the following Julia set:
\[
J_f = \bigcup_{z \in J_p} \{ z \} \times \partial K_z.
\]
Here we define $\partial K_z$ as $\emptyset$
if $K_z = \mathbb{C}$.
It follows that $J_f$ is forward invariant under $f$; 
that is, $f(J_f) \subset J_f$.
If $b_d^{-1} (0) \cap J_p = \emptyset$, 
then $J_f$ is completely invariant under $f$. 
Moreover, $J_f$ is compact 
if and only if $b_d^{-1} (0) \cap J_p = \emptyset$.

The following subset of $J_p$ plays an important role in the proofs:
\[
J_p^* = J_p \setminus \{ \Phi =  - \infty \}.
\]
Note that $J_p^*$ is dense in $J_p$ 
because it contains most periodic points.
Moreover, $\mu_p (J_p^*) = 1$.
For any $z$ in $J_p^*$,
the limits $G_z$ and $\varphi_z$ are well defined.
In addition, $J_p^*$ is forward invariant under $p$, and
$J_p^* \setminus p(J_p^*) \subset p(b_d^{-1} (0))$.

There is another Julia set of $f$ 
that might be appropriately called the Julia set of $f$.
Favre and Guedj proved in \cite[Theorem 6.3]{fg} that the closure
\[
\overline{ \bigcup_{z \in J_p^*} \{ z \} \times \partial K_z} 
\]
coincides with the support of the Green measure of $f$.
Similar to $J_f$, this Julia set is compact 
if and only if $b_d^{-1} (0) \cap J_p = \emptyset$.

\begin{remark}
Almost the same claims as in the next two sections
hold for the symmetries of the last Julia set
under the assumption $b_d^{-1} (0) \cap J_p = \emptyset$.
\end{remark}

\section{Symmetries of Julia sets} 

As a symmetry, we consider a polynomial automorphism
of the form $\gamma (z,w) = (\gamma_1 (z), \gamma_2 (z,w))$
that preserves $J_f$.
Since $\gamma_1$ is conformal, $\gamma_1 (z) = c_1 z + c_2$,
where $c_1$ and $c_2$ are complex numbers.
Since $J_p$ is compact, $|c_1| = 1$.
Since $\gamma_2$ is conformal on each fiber, 
$\gamma_2 (z, w) = c_3 w + c_4 (z)$,
where $c_3$ is a complex number and $c_4$ is a polynomial in $z$.
Since $K_z$ is compact for some $z$ in $J_p$, 
it follows that $|c_3| = 1$.
Therefore, we define the group of the symmetries of $J_f$ as
\[
\Gamma_f 
=
\{ \gamma \in S : \gamma ( J_f ) = J_f \},
\]
where
\[
S 
=
\left\{ \
\gamma 
\left(
\begin{array}{ccccc}
z \\
w 
\end{array}
\right)
=
\left(
\begin{array}{ccccc}
c_1 z + c_2 \\
c_3 w + c_4 (z) 
\end{array}
\right)
:
|c_1| = |c_3| = 1
\right\}.
\]
Let us denote $\gamma$ in $\Gamma_f$ by $(\sigma (z), \gamma_{z} (w))$.
Since $\sigma$ preserves $J_p$, 
it follows that $\sigma$ belongs to $\Sigma_p$.
By definition, 
$\gamma_z (\partial K_z) = \partial K_{\sigma (z)}$ and so 
$\gamma_z (K_z) = K_{\sigma (z)}$ for any $z$ in $J_p$.

\subsection{Centroids} 

As defined in Section 2, 
we define the centroids of $f$ as
\[
\zeta = \frac{- a_{\delta-1}}{\delta a_{\delta}} \ 
\text{ and } \ 
\zeta_z = \frac{- b_{d-1}(z)}{d b_d (z)}.
\]
Although $\zeta$ is a constant, 
$\zeta_z$ is a rational function in $z$.

The fiberwise B\"{o}ttcher function $\varphi_z$
relates to the centroid ${\zeta}_{z}$. 
The following lemma follows from $(i)$ and $(iii)$ 
in Lemma {\rmfamily \ref{Bottcher}}.

\begin{lemma} \label{Bottcher & centroid} 
It follows that
$\varphi_z (w) = w - {\zeta}_{z} + o_z (1)$
for any $z$ in $J_p^*$.
\end{lemma}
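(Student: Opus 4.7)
The plan is to extract the constant term in the Laurent expansion of $\varphi_z$ at infinity by comparing coefficients in the functional equation (iii) of Lemma \ref{Bottcher}. By (i), for each $z \in J_p^*$ there is a complex number $\alpha_z$ (depending on $z$) and a Laurent expansion
\[
\varphi_z (w) = w + \alpha_z + O_z\!\bigl(w^{-1}\bigr) \quad \text{as } w \to \infty,
\]
since $\varphi_z$ is conformal near infinity with derivative tending to $1$. The goal is to identify $\alpha_z = -\zeta_z$, which immediately yields the stated claim $\varphi_z(w) = w - \zeta_z + o_z(1)$.

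First I would substitute the polynomial expansion $q_z(w) = b_d(z) w^d + b_{d-1}(z) w^{d-1} + O_z(w^{d-2})$ into the Laurent expansion of $\varphi_{p(z)}$ at infinity; note that since $z \in J_p^*$, we have $p(z) \in J_p^*$ too (forward invariance of $J_p^*$) and in particular $b_d(z) \neq 0$, so $q_z(w) \to \infty$ as $w \to \infty$ and the expansion of $\varphi_{p(z)}$ can be applied with argument $q_z(w)$. This gives
\[
\varphi_{p(z)} \bigl( q_z(w) \bigr) = b_d(z) w^d + b_{d-1}(z) w^{d-1} + O_z \bigl( w^{d-2} \bigr).
\]

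Next I would expand the right-hand side of (iii):
\[
b_d(z) \bigl( \varphi_z(w) \bigr)^d = b_d(z) \bigl( w + \alpha_z + O_z(w^{-1}) \bigr)^d = b_d(z) w^d + d\, b_d(z) \alpha_z\, w^{d-1} + O_z \bigl( w^{d-2} \bigr).
\]
Equating the coefficients of $w^{d-1}$ in the two expressions yields $b_{d-1}(z) = d\, b_d(z)\, \alpha_z$, and dividing by $d\, b_d(z) \neq 0$ gives $\alpha_z = b_{d-1}(z)/(d\, b_d(z)) = -\zeta_z$, as desired.

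The argument is essentially a routine Laurent-coefficient comparison, so there is no real obstacle, only the minor bookkeeping point that the expansion of $\varphi_{p(z)}$ is legitimate at the argument $q_z(w)$; this is secured by the observation that $J_p^*$ is forward invariant under $p$ and disjoint from the zeros of $b_d$ (since any such zero would contribute $-\infty$ to $\Phi$ at its forward orbit).
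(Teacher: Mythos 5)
Your proof is correct and follows exactly the route the paper intends: the paper gives no explicit proof, stating only that the lemma ``follows from $(i)$ and $(iii)$ in Lemma \ref{Bottcher}'', and your Laurent-coefficient comparison in the functional equation $\varphi_{p(z)}(q_z(w)) = b_d(z)(\varphi_z(w))^d$ is precisely that derivation. The bookkeeping points you flag (forward invariance of $J_p^*$ and $b_d(z)\neq 0$ there, since a zero of $b_d$ forces $\Phi(z)=-\infty$) are handled correctly.
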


We first show that 
a symmetry $\gamma$ is birationally conjugate to a rotational product.

\begin{proposition}\label{centroid} 
For any $\gamma$ in $\Gamma_f$, 
there exist $\mu$ and $\nu$ in $S^1$ such that
\[
\gamma
\left(
\begin{array}{ccccc}
z \\
w 
\end{array}
\right)
=
\left(
\begin{array}{ccccc}
\mu ( z - \zeta ) + \zeta \\
\nu ( w - {\zeta}_{z} ) + {\zeta}_{\sigma (z)} 
\end{array}
\right),
\]
where $\sigma (z) = \mu ( z - \zeta ) + \zeta$ belongs to $\Sigma_p$.
\end{proposition}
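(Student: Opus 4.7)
The plan is to use the fiberwise Böttcher function $\varphi_z$ of Lemma \ref{Bottcher} to read off the centroid $\zeta_z$ intrinsically from the compact set $K_z$, and thereby force the shape of the constant term of $\gamma_z$. The first coordinate is immediate: as already observed, the projection $\sigma$ of $\gamma$ lies in $\Sigma_p$, so Proposition \ref{poly, centroid} gives $\sigma(z) = \mu(z-\zeta)+\zeta$ for some $\mu \in S^1$. Writing $\gamma_z(w) = \nu w + c_4(z)$ with $\nu := c_3$ of modulus one, the remaining content of the proposition is the identity
\[
c_4(z) = \zeta_{\sigma(z)} - \nu \zeta_z.
\]

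The main step is to compare the fiberwise Green functions. I would fix $z \in J_p^*$ with $\sigma(z) \in J_p^*$; since $\sigma_{*} \mu_p = \mu_p$ (as $G_p \circ \sigma = G_p$ whenever $\sigma \in \Sigma_p$), this condition holds on a $\mu_p$-full, hence dense, subset of $J_p$. Because $\gamma_z$ is an affine isometry of $\mathbb{C}$ sending $K_z$ onto $K_{\sigma(z)}$, the composition $G_{\sigma(z)} \circ \gamma_z$ is nonnegative, continuous, subharmonic, vanishes on $K_z$, is harmonic off $K_z$, and behaves like $\log|w| + \Phi(\sigma(z)) + o(1)$ at infinity (here $|\nu|=1$ is crucial for the leading coefficient). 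Uniqueness of the Green function of the nonpolar compact set $K_z$ with pole at infinity then forces $G_{\sigma(z)} \circ \gamma_z = G_z$, and matching the constant asymptotics yields $\Phi(z) = \Phi(\sigma(z))$, i.e.\ $c_z = c_{\sigma(z)}$. Combined with (ii) of Lemma \ref{Bottcher}, this gives $|\varphi_{\sigma(z)}(\gamma_z(w))| = |\varphi_z(w)|$ near infinity.

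The holomorphic ratio $\varphi_{\sigma(z)}(\gamma_z(w))/\varphi_z(w)$ is thus a unimodular constant $\lambda$. Using Lemma \ref{Bottcher & centroid} to expand both sides as $w \to \infty$,
\[
\nu w + c_4(z) - \zeta_{\sigma(z)} + o_z(1) = \lambda\bigl(w - \zeta_z\bigr) + o_z(1),
\]
the leading coefficients give $\lambda = \nu$ and the constant coefficients give $c_4(z) = \zeta_{\sigma(z)} - \nu \zeta_z$ on the dense set above. Since $c_4$ is a polynomial in $z$ and the right-hand side is a rational function in $z$, this identity extends to an equality of rational functions on all of $\mathbb{C}$, completing the proof.

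The hard part will be the intrinsic identification $\Phi(z) = \Phi(\sigma(z))$: I do not expect to extract it from the explicit series defining $\Phi$, but rather from the Robin-constant viewpoint via the uniqueness of the Green function with pole at infinity for the compact set $K_z$. Once this is in hand, everything reduces to a clean extraction of the first two coefficients from the fiberwise Böttcher expansion.
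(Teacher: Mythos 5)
Your proof is correct and follows essentially the same route as the paper: compare the fiberwise Green functions to get $G_{\sigma(z)}\circ\gamma_z=G_z$ on the dense set $J_p^*\cap\sigma^{-1}J_p^*$, deduce $\varphi_{\sigma(z)}\circ\gamma_z=\nu\varphi_z$ from the Böttcher normalization, read off the constant term via Lemma \ref{Bottcher & centroid}, and extend by density and rationality. The only difference is that you spell out details the paper leaves implicit (the $\mu_p$-invariance argument for density, the identity $\Phi\circ\sigma=\Phi$, and the constancy of the unimodular ratio), which is fine.
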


\begin{proof}
Because of Lemma {\rmfamily \ref{Bottcher}}
and the definition of the symmetry,
the equation $G_{\sigma (z)} \gamma_z = G_z$ implies 
the equation $\varphi_{\sigma (z)} \gamma_z = \nu \varphi_z$
for some $\nu$ in $S^1$
over $J_p^* \cap \sigma^{-1} J_p^*$. 
Hence, 
it follows from Lemma {\rmfamily \ref{Bottcher & centroid}}
that $\gamma_z (w) = \nu ( w - {\zeta}_{z} ) + {\zeta}_{\sigma (z)}$
over $J_p^* \cap \sigma^{-1} J_p^*$, which is dense in $J_p$.
By the identity theorem of holomorphic functions on horizontal lines,
the equation extends to $\{ b_d(z) b_d (\sigma(z)) \neq 0 \} \times \mathbb{C}$.
By Riemann's removable singularity theorem,
$\nu ( w - {\zeta}_{z} ) + {\zeta}_{\sigma (z)}$ 
is also polynomial on $\mathbb{C}^2$.
Therefore, the equation holds on $\mathbb{C}^2$.
\end{proof}

\begin{corollary} 
It follows that $\sigma$ completely preserves 
the set $\{ z \in J_p : \zeta_z = \infty \}$,
where $\sigma$ is the first component of $\gamma$ in $\Gamma_f$. 
\end{corollary}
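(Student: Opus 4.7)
The plan is to exploit the polynomial equation for $\gamma$ obtained in Proposition \ref{centroid} by comparing poles on both sides.

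First I would start from the identity
\[
\gamma_z(w) = \nu(w - \zeta_z) + \zeta_{\sigma(z)},
\]
which, by Proposition \ref{centroid}, holds on all of $\mathbb{C}^2$. Writing $\gamma$ in its original form $\gamma(z,w) = (\sigma(z),\, c_3 w + c_4(z))$ with $c_4$ a polynomial in $z$, the constant-in-$w$ coefficient yields
\[
\zeta_{\sigma(z)} - \nu\, \zeta_z \; = \; c_4(z),
\]
an equality of rational functions in $z$ whose right-hand side is a polynomial.

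Next I would analyze the poles of each side. Since $\zeta_z = -b_{d-1}(z)/(d\, b_d(z))$ is rational, its poles are finite in number. If $z_0$ is a pole of $\zeta_z$, then $\nu\, \zeta_z$ has a pole at $z_0$, so for the difference $\zeta_{\sigma(z)} - \nu\, \zeta_z$ to remain polynomial, the term $\zeta_{\sigma(z)}$ must also blow up at $z_0$; that is, $\sigma(z_0)$ must be a pole of $\zeta_z$. Conversely, if $\sigma(z_0)$ is a pole of $\zeta_z$, then $\zeta_{\sigma(z)}$ has a pole at $z_0$, forcing $z_0$ itself to be a pole of $\zeta_z$. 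Hence $z_0 \in \{\zeta_z = \infty\}$ if and only if $\sigma(z_0) \in \{\zeta_z = \infty\}$.

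Finally, since $\sigma \in \Sigma_p$ completely preserves $J_p$ by definition, intersecting the above equivalence with $J_p$ gives
\[
\sigma\bigl(\{z \in J_p : \zeta_z = \infty\}\bigr) = \{z \in J_p : \zeta_z = \infty\},
\]
which is the claim. The only subtlety I anticipate is bookkeeping the ``completely preserves'' direction, but this is automatic because the pole matching argument above is a biconditional at the level of individual points and $\sigma$ is an affine bijection of $\mathbb{C}$.
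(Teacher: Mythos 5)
Your argument is correct and is essentially the one the paper intends: the corollary is stated without proof as an immediate consequence of Proposition \ref{centroid}, whose proof already records (via Riemann's removable singularity theorem) that $\zeta_{\sigma(z)} - \nu\,\zeta_z$ extends to a polynomial in $z$, so the pole sets of $\zeta_z$ and $\zeta_{\sigma(z)}$ must coincide. Your pole-matching biconditional, combined with $\sigma(J_p)=J_p$, is exactly the intended justification.
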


By Proposition {\rmfamily \ref{centroid}},
we can identify $\Gamma_f$ with a subgroup of the torus:
\[
\Gamma_f 
\simeq \{ (\mu, \nu) \in S^1 \times S^1: \gamma_{\mu, \nu} \in \Gamma_f \}
\subset S^1 \times S^1,
\]
where
\[
\gamma_{\mu, \nu}
\left(
\begin{array}{ccccc}
z \\
w 
\end{array}
\right)
=
\left(
\begin{array}{ccccc}
\mu ( z - \zeta ) + \zeta \\
\nu ( w - {\zeta}_{z} ) + {\zeta}_{\sigma (z)} 
\end{array}
\right).
\]
We use the notation $=$ instead of $\simeq$
hereafter. 
By definition,
$\Gamma_f \subset \Sigma_p \times S^1$.

\subsection{Normal form}

Referring to the polynomial case,
we say that $f$ is in normal form
if $p$ and $b_d$ are monic
and $a_{\delta - 1}$ and $b_{d - 1}$ are the constant $0$.
Roughly speaking,
the normality of $f$ is equivalent to those of $p$ and $q_z$.
Hence if $f$ is in normal form,
then the centroids are at the origin.

Unlike the cases of polynomials
and nondegenerate polynomial skew products,
we may not assume that $f$ is in normal form
without loss of generality.
However, 
we can normalize $f$ to a rational map  
as follows.
Define $h(z,w) = (c_1 (z - \zeta), c_2 (w - \zeta_z))$,
where $c_1^{\delta - 1}$ is equal to $a_{\delta}$,
the coefficient of the leading term of $p$,
and $c_1^l c_2^{d - 1}$ is equal to 
the coefficient of the leading term of $b_d$.
Then $h$ is a birational map.
Let $\tilde{f}$ be the conjugation of $f$ by $h$: 
$h f = \tilde{f} h$.
This rational map satisfies 
all conditions in the definition of normality.
Hence we call $\tilde{f}$ 
the normalized rational skew product of $f$,
which appears in Section 5.2.

\subsection{Functional equations}

Under some assumptions,
we characterize $\Gamma_f$ by functional equations
including the iterates of $f$.
Although the group $\Sigma_p$ of a polynomial $p$
is characterized by the unique equation
$p \sigma = \sigma^{\delta} p$,
our characterization of $\Gamma_f$ needs
infinitely many equations
as in \cite[Lemma 3.2]{u}.
Referring to Proposition {\rmfamily \ref{centroid}},
we define 
\[
\mathcal{E}_f = \{ \gamma \in S:
f^n \gamma = \gamma_n f^n \text{ for } \forall n \geq 1 \},
\]
where
\[
\gamma_n
\left(
\begin{array}{ccccc}
z \\
w 
\end{array}
\right)
=
\left(
\begin{array}{ccccc}
\mu^{\delta^n} (z - \zeta) + \zeta \\
\mu^{l_n} \nu^{d^n} (w - {\zeta}_{p^n(z)}) + {\zeta}_{p^n(\sigma (z))} 
\end{array}
\right)
\]
and $l_n = l(\delta^n - d^n)/(\delta - d)$.
Unlike the nondegenerate case,
we need some assumptions for $\Gamma_f$
to coincide with $\mathcal{E}_f$, 
which may be removable.

We first provide a lemma 
about certain symmetries of the polynomial $b_d$.

\begin{lemma}\label{sym of b_d} 
The identity $|b_d (\sigma (z))| = |b_d (z)|$ holds
for any symmetry $\sigma$ and  
for any $z$ in $J_p^* \cap \sigma^{-1} J_p^*$,
where $\sigma$ is the first component of $\gamma$ in $\Gamma_f$.
\end{lemma}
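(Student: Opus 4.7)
The strategy is to pull invariance of the asymptotic constant $\Phi(z)$ out of the Green-function identity $G_z=G_{\sigma(z)}\circ\gamma_z$ that already appears implicitly in Proposition~\ref{centroid}, and then to transfer this invariance to $|b_d|$ by combining the natural functional equation satisfied by $\Phi$ with the polynomial intertwining $p\sigma=\sigma^\delta p$ from Proposition~\ref{poly, fun-eq}.

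For the first step, fix $z\in J_p^*\cap\sigma^{-1}J_p^*$. The relation $\gamma_z(K_z)=K_{\sigma(z)}$ together with the uniqueness of the Green function of a compact set forces $G_z=G_{\sigma(z)}\circ\gamma_z$ on $\mathbb{C}$. Comparing the asymptotic expansions $G_z(w)=\log|w|+\Phi(z)+o_z(1)$ and $G_{\sigma(z)}(\gamma_z(w))=\log|w|+\Phi(\sigma(z))+o(1)$ as $w\to\infty$ — justified because $|\nu|=1$, so $\gamma_z$ contributes no extra logarithmic term — yields $\Phi(z)=\Phi(\sigma(z))$ on $J_p^*\cap\sigma^{-1}J_p^*$. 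Next, the series defining $\Phi$ gives the functional equation $d\Phi(z)=\log|b_d(z)|+\Phi(p(z))$ at once. Applying it at $z$ and at $\sigma(z)$, subtracting, and using the identity just established, I would obtain
\[
\log|b_d(z)|-\log|b_d(\sigma(z))|=\Phi(p(\sigma(z)))-\Phi(p(z)),
\]
so by Proposition~\ref{poly, fun-eq} the task reduces to showing $\Phi(p(z))=\Phi(\sigma^\delta(p(z)))$.

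To close, I would exploit the group structure of $\Gamma_f$: since $\gamma^\delta\in\Gamma_f$ has first component $\sigma^\delta$, the first-step argument applied to $\gamma^\delta$ delivers $\Phi(y)=\Phi(\sigma^\delta(y))$ on $J_p^*\cap\sigma^{-\delta}J_p^*$, and it only remains to verify that $y=p(z)$ lies in this set. However, membership in $J_p^*$ prevents the series defining $\Phi$ from containing a $-\infty$ term, which forces $b_d(p^n(z))\neq 0$ for every $n\geq 0$; together with the functional equation this gives $p(z)\in J_p^*$, and similarly $p(\sigma(z))=\sigma^\delta(p(z))\in J_p^*$. Hence the invariance applies at $y=p(z)$ and yields $|b_d(z)|=|b_d(\sigma(z))|$. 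The main delicacy in the argument is the bookkeeping of the exceptional set $\{\Phi=-\infty\}$: the functional equation for $\Phi$ must be iterated through points that remain in $J_p^*$, which is guaranteed precisely because $z,\sigma(z)\in J_p^*$.
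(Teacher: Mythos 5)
Your argument is correct. On the surface the paper proves the lemma differently --- via the commutative diagram of fiberwise B\"{o}ttcher functions, using $\varphi_{p(z)} q_z = b_d(z) \varphi_z^d$, $\varphi_{\sigma(z)}\gamma_z = \nu\varphi_z$, and the observation that the chain of isometries $\gamma_{\sigma^{\delta-1}p(z)}\cdots\gamma_{p(z)}$ carries $K_{p(z)}$ onto $K_{p\sigma(z)}$ --- but the two proofs are two renderings of the same mechanism. Your invariance $\Phi\sigma=\Phi$ is exactly the statement that the symmetry preserves the fiberwise Robin constants $\log c_z$, which the paper encodes in $\varphi_{\sigma(z)}\gamma_z=\nu\varphi_z$; your cocycle $d\,\Phi(z)=\log|b_d(z)|+\Phi(p(z))$ is the logarithm of relation (iii) in Lemma~\ref{Bottcher}; and your use of $\gamma^{\delta}$ to obtain $\Phi(p(z))=\Phi(\sigma^{\delta}p(z))$ is precisely the paper's chain of $\gamma$'s over the $\sigma$-orbit of $p(z)$, since that chain is the fiber component of $\gamma^{\delta}$ over $p(z)$. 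What your version buys is that it avoids the B\"{o}ttcher coordinate entirely and works only with the Green functions and the explicit series for $\Phi$, which makes the bookkeeping of the exceptional set $\{\Phi=-\infty\}$ (your closing paragraph) transparent: $z\in J_p^*$ forces $b_d(p^n(z))\neq 0$ for all $n$, so all logarithms in the subtraction are finite and $p(z)$, $p(\sigma(z))$ stay in $J_p^*$. The one step worth stating explicitly is the uniqueness invoked at the start: $K_z$ has positive capacity exactly because $\Phi(z)>-\infty$, so the Green function of $K_z$ with pole at infinity is unique and the identification $G_z=G_{\sigma(z)}\circ\gamma_z$ is legitimate --- this is the same identity the paper uses without comment in the proof of Proposition~\ref{centroid}.
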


\begin{proof}
Let $K_z^c = \mathbb{C} \setminus K_z$.
For any $z$ in $J_p^* \cap \sigma^{-1} J_p^*$,
the following commutative diagram holds:
\begin{equation*}
\begin{CD}
 K_{p(z)}^c @<\text{$q_z$}<< K_z^c @>\text{$\gamma_z$}>> K_{\sigma (z)}^c 
 @>\text{$q_{\sigma (z)}$}>> K_{p \sigma (z)}^c \\
 @VV\text{$\varphi_{p(z)}$}V @V\text{$\varphi_{z}$}VV 
 @V\text{$\varphi_{\sigma (z)}$}VV @V\text{$\varphi_{p \sigma (z)}$}VV \\
 \mathbb{C} @<\text{$b_d(z) w^d$}<< \mathbb{C} @>\text{$\nu w$}>> 
 \mathbb{C} @>\text{$b_d (\sigma (z)) w^d$}>> \mathbb{C}.
\end{CD}
\vspace{4mm}
\end{equation*}
Although $\varphi_z$ may not be defined on $K_z^c$, 
it is defined near infinity.
Since $p \sigma = \sigma^{\delta} p$, 
the Euclidean isometry 
$\gamma_{\sigma^{\delta - 1} p(z)} \cdots \gamma_{\sigma p(z)} \gamma_{p(z)}$
maps $K_{p(z)}$ onto $K_{p \sigma (z)}$.
Therefore, $|b_d (\sigma (z)) (\nu w)^d| = |b_d (z) w^d|$.
\end{proof}

We use this lemma to prove the main theorems in the next section.
It is natural to ask whether or not 
the equation $b_d (\sigma (z)) = \mu^l b_d (z)$ holds.
In the following proposition,
we assume some conditions that guarantee this equation.

\begin{proposition} \label{fun-eq, b = z^l} 
If $p$ is in normal form and $b_d (z) = z^l$, then
$\Gamma_f \subset \mathcal{E}_f$.
Moreover, $\sigma (J_p^*) = J_p^*$,
where $\sigma$ is the first component of $\gamma$ in $\Gamma_f$.
\end{proposition}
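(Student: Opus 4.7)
The plan is to lift $\gamma\in\Gamma_f$ to the fiberwise B\"ottcher coordinate, iterate the cocycle identity (iii) of Lemma \ref{Bottcher}, and then recover the polynomial identity $f^n\gamma=\gamma_n f^n$ from its leading-order expansion at infinity. Along the way I need $\sigma(J_p^*)=J_p^*$, which I establish first.

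By Proposition \ref{centroid} (with $\zeta=0$ since $p$ is in normal form), $\sigma(z)=\mu z$ and $\gamma_z(w)=\nu(w-\zeta_z)+\zeta_{\sigma(z)}$. Since $\sigma\in\Sigma_p$ satisfies $p\sigma=\sigma^\delta p$ by Proposition \ref{poly, fun-eq}, induction gives $p^n\sigma(z)=\mu^{\delta^n}p^n(z)$, matching the first component of $\gamma_n f^n$. In particular $|p^n\sigma(z)|=|p^n(z)|$, and since $b_d(z)=z^l$ gives
\[
\Phi(z)=l\sum_{n\ge 0}d^{-(n+1)}\log|p^n(z)|,
\]
we get $\Phi\circ\sigma=\Phi$; so $\sigma$ preserves both $J_p$ and $\{\Phi=-\infty\}$, and hence $\sigma(J_p^*)=J_p^*$.

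For the fiber component I iterate property (iii) of Lemma \ref{Bottcher} to obtain
\[
\varphi_{p^n(z)}\bigl(Q_z^n(w)\bigr)=B_n(z)\,\varphi_z(w)^{d^n},\qquad B_n(z)=\prod_{k=0}^{n-1}b_d(p^k(z))^{d^{n-1-k}}.
\]
Using $b_d(z)=z^l$ together with $p^k(\mu z)=\mu^{\delta^k}p^k(z)$ and the geometric sum $\sum_{k=0}^{n-1}\delta^k d^{n-1-k}=(\delta^n-d^n)/(\delta-d)$, a direct calculation gives $B_n(\sigma(z))=\mu^{l_n}B_n(z)$. Combined with the B\"ottcher conjugacy $\varphi_{\sigma(z)}\gamma_z=\nu\varphi_z$ extracted as in the proof of Proposition \ref{centroid} (valid on all of $J_p^*$ thanks to the preceding step), this yields
\[
\varphi_{p^n\sigma(z)}\bigl(Q_{\sigma(z)}^n\gamma_z(w)\bigr)=\mu^{l_n}\nu^{d^n}\,\varphi_{p^n(z)}\bigl(Q_z^n(w)\bigr)
\]
near $w=\infty$ for every $z\in J_p^*$.

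The main obstacle is turning this B\"ottcher identity, which a priori lives only near infinity, into the desired polynomial identity on $\mathbb{C}^2$. I would expand each side with Lemma \ref{Bottcher & centroid} and rearrange to
\[
Q_{\sigma(z)}^n\gamma_z(w)-\zeta_{p^n\sigma(z)}+o(1)=\mu^{l_n}\nu^{d^n}\bigl(Q_z^n(w)-\zeta_{p^n(z)}\bigr)+o(1)
\]
as $w\to\infty$. Both sides are polynomials in $w$ of degree $d^n$, so the $o(1)$ tails force the two polynomials to coincide for each fixed $z\in J_p^*$. This is precisely the second component of $\gamma_n f^n(z,w)$, so $f^n\gamma=\gamma_n f^n$ holds on the dense set $J_p^*\times\mathbb{C}$; the identity-theorem-on-horizontal-lines argument and Riemann's removable-singularity theorem, exactly as in the final step of the proof of Proposition \ref{centroid}, then extend the equation to all of $\mathbb{C}^2$, yielding $\gamma\in\mathcal{E}_f$.
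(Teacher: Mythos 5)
Your proof is correct and follows essentially the same route as the paper: it uses the B\"ottcher cocycle of Lemma \ref{Bottcher}(iii), the relation $b_d(\sigma(z))=\mu^l b_d(z)$ forced by $b_d(z)=z^l$ and $p\sigma=\sigma^\delta p$, the asymptotic expansion of Lemma \ref{Bottcher & centroid} to pass from the B\"ottcher identity to the polynomial one, and the identity-theorem/removable-singularity extension to $\mathbb{C}^2$. The only (harmless) differences are cosmetic: you prove $\Phi\circ\sigma=\Phi$ and hence $\sigma(J_p^*)=J_p^*$ first rather than last, which lets you work on all of $J_p^*$ instead of the intersections $J_p^*\cap(\cap_{j}\sigma^{-1}p^{-j}J_p^*)$, and you write the $n$-fold iterate explicitly via the product $B_n$ rather than treating $n=1$ and saying ``similarly.''
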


\begin{proof} 
Let $\gamma$ be a symmetry in $\Gamma_f$.
Proposition {\rmfamily \ref{poly, fun-eq}} 
implies that $p \sigma = \sigma^{\delta} p$
and so $p^n \sigma = \sigma^{\delta^n} p^n$; that is, 
$p^n (\sigma (z)) = \mu^{\delta^n} (p^n (z) - \zeta) + \zeta$.

We first show that $f \gamma = \gamma_1 f$.
It follows from the commutative diagram above that
$\varphi_{p(z)} (q_z (w)) = b_d (z) (\varphi_z (w))^d$
and $\varphi_{p \sigma (z)} (q_{\sigma (z)} \gamma_z (w))
= b_d (\sigma (z)) (\nu \varphi_z (w))^d$
over $J_p^* \cap \sigma^{-1} J_p^* \cap \sigma^{-1} p^{-1} J_p^*$.
Thus
$b_d (z) \varphi_{p \sigma (z)} (q_{\sigma (z)} $ $\gamma_z (w)) 
= b_d (\sigma (z)) \nu^d \varphi_{p(z)} (q_z (w))$,
which implies that
\[
b_d (z) (q_{\sigma (z)} (\gamma_z (w)) - \zeta_{p \sigma (z)}) 
= b_d (\sigma (z)) \nu^d (q_z (w) - \zeta_{p(z)}).
\]
As in the proof of Proposition {\rmfamily \ref{centroid}},
we can extend this equation to $\mathbb{C}^2$.
By assumption, 
\[
b_d (\sigma (z)) = \mu^l  b_d (z).
\]
Therefore,
$q_{\sigma (z)} (\gamma_z (w)) 
= \mu^l \nu^d (q_z (w) - \zeta_{p(z)}) + \zeta_{p \sigma (z)}$.  

Next, we show that $f^n \gamma = \gamma_n f^n$ for any $n \geq 2$.
Although we do not know whether $\gamma_1$ belongs to $\Gamma_f$,
similar arguments as above induce the equation
$Q_{\sigma (z)}^2 (\gamma_z (w))$ $= \mu^{(\delta + d)l} \nu^{d^2}
(Q_z^2 (w) - \zeta_{p^2 (z)}) + \zeta_{p^2 \sigma (z)}$
over $J_p^* \cap ( \cap_{j = 0}^{2} \sigma^{-1} p^{-j} J_p^* )$.
Similarly, we have the equation
$Q_{\sigma (z)}^n (\gamma_z (w)) = 
\mu^{l_n} \nu^{d^n} (Q_z^n (w) - \zeta_{p^n(z)}) + \zeta_{p^n \sigma (z)}$
over $J_p^* \cap ( \cap_{j = 0}^{n} \sigma^{-1} p^{-j} J_p^* )$,
which extends to $\mathbb{C}^2$.

Finally, 
the equation $b_d \sigma = \mu^l b_d$ implies 
$b_d (p^n \sigma (z)) = \mu^{l \delta^n} b_d (p^n (z))$.
Thus $\Phi \sigma = \Phi$
and so $\sigma (J_p^*) = J_p^*$.
\end{proof}

With a slight change in the proof,
we can replace the assumption in 
this proposition with the assumption that 
$q$ is not divisible by any nonconstant polynomial in $z$.

We use the following corollary of 
Proposition {\rmfamily \ref{fun-eq, b = z^l}}
to calculate the groups of symmetries for some examples
in Section 4.4
and to prove the main theorems in Section 5. 

\begin{corollary}\label{cor of fun-eq, b = z^l} 
If $f$ is in normal form and $b_d (z) = z^l$,
then 
\[
q(\mu z, \nu w) = \mu^{l} \nu^{d} q(z,w)
\]
for any $\gamma (z, w) = (\mu z, \nu w)$ in $\Gamma_f$.
\end{corollary}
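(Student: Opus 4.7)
The plan is to invoke Proposition \ref{fun-eq, b = z^l} and specialize the identity $f\gamma = \gamma_1 f$ to the normal-form setting. Since $f$ is in normal form, in particular $p$ is in normal form and $b_d(z) = z^l$ is monic, so the hypotheses of Proposition \ref{fun-eq, b = z^l} are satisfied. Therefore $\Gamma_f \subset \mathcal{E}_f$, and taking $n = 1$ yields $f\gamma = \gamma_1 f$ for every $\gamma \in \Gamma_f$.

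Next I would simplify the defining formula for $\gamma_1$ using the normal-form conditions. Since $a_{\delta-1} = 0$ and $a_\delta = 1$, the centroid $\zeta = -a_{\delta-1}/(\delta a_\delta)$ equals $0$. Since $b_{d-1}$ is identically zero, the fiberwise centroid $\zeta_z = -b_{d-1}(z)/(d b_d(z))$ vanishes identically, and in particular $\zeta_{p(z)} = \zeta_{p\sigma(z)} = 0$. By Proposition \ref{centroid}, $\gamma$ therefore takes the form $\gamma(z,w) = (\mu z, \nu w)$ for some $(\mu,\nu) \in S^1 \times S^1$, and the index reduces to $l_1 = l(\delta - d)/(\delta - d) = l$. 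Consequently $\gamma_1(z,w) = (\mu^\delta z, \mu^l \nu^d w)$.

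Finally, I would read off the second coordinate of the equation $f\gamma = \gamma_1 f$, which becomes
\[
q(\mu z, \nu w) = \mu^l \nu^d \, q(z,w),
\]
since all centroid shifts vanish. This is the claimed identity.

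The argument is essentially bookkeeping, as the substantive content is already in Proposition \ref{fun-eq, b = z^l}; the only thing to be careful about is verifying that $l_1 = l$ and that all relevant centroids are $0$, so that the otherwise awkward affine pieces in the definition of $\gamma_1$ collapse and leave a clean homogeneity relation for $q$.
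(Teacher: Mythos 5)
Your proposal is correct and is exactly the intended derivation: the paper states this as an immediate corollary of Proposition \ref{fun-eq, b = z^l} (case $n=1$), with the normal-form hypotheses forcing $\zeta = 0$, $\zeta_z \equiv 0$, and $l_1 = l$, so that $f\gamma = \gamma_1 f$ collapses to the stated homogeneity relation. No gaps.
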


For the inverse inclusion, 
we have the following. 

\begin{proposition}\label{inverse}
If $b_d^{-1} (0) \cap J_p = \emptyset$ or $b_{d-1} (z) \equiv 0$, 
then $\Gamma_f \supset \mathcal{E}_f$.
\end{proposition}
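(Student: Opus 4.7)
The plan is to deduce from the functional equations $f^n\gamma=\gamma_n f^n$ defining $\mathcal{E}_f$ that the fiberwise Green functions satisfy $G_{\sigma(z)}\circ\gamma_z=G_z$, and from this conclude $\gamma_z(\partial K_z)=\partial K_{\sigma(z)}$ for every $z\in J_p$, hence $\gamma(J_f)=J_f$. The strategy is analogous in spirit to the proof of Proposition \ref{centroid}, except that instead of reading off the leading terms of $\varphi_z$ I pass to the limit $n\to\infty$ in the iterate equations themselves.

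First, I would read off the second coordinate of $f^n\gamma=\gamma_n f^n$:
\[
Q_{\sigma(z)}^n(\gamma_z(w))
= \mu^{l_n}\nu^{d^n}\bigl(Q_z^n(w)-\zeta_{p^n(z)}\bigr)+\zeta_{p^n(\sigma(z))}.
\]
The two hypotheses are tailored precisely so that the centroid terms $\zeta_{p^n(z)}$ and $\zeta_{p^n(\sigma(z))}$ are uniformly bounded in $n$ whenever $z\in J_p$: under $b_{d-1}\equiv 0$ they vanish identically, while under $b_d^{-1}(0)\cap J_p=\emptyset$ the rational function $\zeta_z=-b_{d-1}(z)/(d\,b_d(z))$ is continuous on the compact, forward $p$-invariant set $J_p$. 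Since $|\mu|=|\nu|=1$, the triangle inequality combined with the elementary bound $|\log^+|A+B|-\log^+|A||=O(\log(1+|B|))$ yields
\[
\bigl|\log^+|Q_{\sigma(z)}^n(\gamma_z(w))|-\log^+|Q_z^n(w)|\bigr|=O(1)
\]
uniformly in $n$. Dividing by $d^n$ and letting $n\to\infty$ gives $G_{\sigma(z)}(\gamma_z(w))=G_z(w)$ for every $z\in J_p^*\cap\sigma^{-1}J_p^*$ and every $w\in\mathbb{C}$, whence $\gamma_z(K_z)=K_{\sigma(z)}$ and, $\gamma_z$ being an affine homeomorphism, $\gamma_z(\partial K_z)=\partial K_{\sigma(z)}$.

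It remains to extend this identity from the dense subset $J_p^*\cap\sigma^{-1}J_p^*$ to all of $J_p$ and to deduce both inclusions $\gamma(J_f)\subset J_f$ and $\gamma(J_f)\supset J_f$. The reverse inclusion is immediate because $\gamma^{-1}\in\mathcal{E}_f$: the system $\gamma_n$ inverts explicitly with parameters $(\mu^{-1},\nu^{-1})$ in place of $(\mu,\nu)$. I expect the main technical obstacle to be the extension to exceptional fibers over $z\in J_p\setminus J_p^*$, where $K_z$ may degenerate. Under the first hypothesis this is bypassed because $b_d^{-1}(0)\cap J_p=\emptyset$ forces $|b_d|$ to have a positive lower bound on $J_p$, hence $\Phi$ is finite everywhere on $J_p$ and $J_p^*=J_p$ outright. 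Under the second hypothesis, $\zeta_z\equiv 0$ reduces the iterate equation to the polynomial identity $Q_{\sigma(z)}^n(\gamma_z(w))=\mu^{l_n}\nu^{d^n}Q_z^n(w)$ on all of $\mathbb{C}^2$, from which the equality of Green functions propagates directly to every $z\in J_p$ without any density argument.
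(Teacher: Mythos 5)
Your proposal is correct and follows essentially the same route as the paper: the paper's (very terse) proof likewise deduces $G_{\sigma(z)}\gamma_z = G_z$ from the functional equations $f^n\gamma=\gamma_n f^n$, concludes $\gamma_z(K_z)=K_{\sigma(z)}$ for every $z\in J_p$, and hence that $\gamma$ preserves $J_f$. You have simply supplied the details the paper omits, namely that under either hypothesis the centroid terms $\zeta_{p^n(z)}$, $\zeta_{p^n(\sigma(z))}$ stay bounded (or vanish), so the discrepancy in $\log^+$ is $O(1)$ and dies after division by $d^n$.
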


\begin{proof} 
For both cases,
the equation $f^n \gamma = \gamma_n f^n$ induces
that $G_{\sigma (z)} \gamma_z = G_z$ 
and so $\gamma_z (K_z) = K_{\sigma (z)}$
for any $z$ in $J_p$.
Hence
$\gamma$ in $\mathcal{E}_f$ preserves $J_f$.
\end{proof}

Combining Propositions {\rmfamily \ref{fun-eq, b = z^l}}
and {\rmfamily \ref{inverse}},
we get sufficient conditions for $\Gamma_f$
to coincide with $\mathcal{E}_f$.

\begin{corollary}\label{Gamma = F} 
Assume that $f$ satisfies one of the following conditions$:$
$(i)$ $f$ is in normal form and $q$ is not divisible by 
any nonconstant polynomial in $z$, 
$(ii)$ $f$ is in normal form and $b_d (z) = z^l$,
$(iii)$ $p(z) = z^{\delta}$ and $b_d (z) = z^l$.
Then $\Gamma_f = \mathcal{E}_f$.
In particular, $\Gamma_f$ is compact,
and $\gamma_n$ belongs to $\Gamma_f$ for any $n \geq 1$
if $\gamma$ belongs to $\Gamma_f$.
\end{corollary}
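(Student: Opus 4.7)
The plan is to deduce $\Gamma_f = \mathcal{E}_f$ by verifying that each of the three listed conditions fits the hypotheses of both Proposition \ref{fun-eq, b = z^l} and Proposition \ref{inverse}, and then to read off the compactness and prolongation claims.

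For the forward inclusion $\Gamma_f \subset \mathcal{E}_f$, case (ii) is a direct application of Proposition \ref{fun-eq, b = z^l}, noting that $f$ being in normal form entails $p$ being in normal form. Case (iii) is likewise direct, since $p(z) = z^{\delta}$ is automatically in normal form and $b_d(z) = z^l$. For case (i), I invoke the strengthening of Proposition \ref{fun-eq, b = z^l} recorded in the remark following its proof, where the hypothesis ``$b_d(z) = z^l$'' is replaced by ``$q$ is not divisible by any nonconstant polynomial in $z$''. For the reverse inclusion $\mathcal{E}_f \subset \Gamma_f$, I apply Proposition \ref{inverse}: in cases (i) and (ii), $f$ being in normal form forces $b_{d-1} \equiv 0$; in case (iii), $b_d(z) = z^l$ vanishes only at the origin while $J_p = S^1$ for $p(z) = z^{\delta}$, so $b_d^{-1}(0) \cap J_p = \emptyset$.

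For compactness, the identification $\Gamma_f \hookrightarrow S^1 \times S^1$ supplied by Proposition \ref{centroid} and the subsequent discussion presents $\mathcal{E}_f$ as the intersection over $n \geq 1$ and $(z, w) \in \mathbb{C}^2$ of the zero loci of the continuous functions $(\mu, \nu) \mapsto f^n \gamma_{\mu, \nu}(z, w) - \gamma_{\mu, \nu, n} f^n(z, w)$. Hence $\Gamma_f = \mathcal{E}_f$ is a closed subset of the compact torus and therefore compact.

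For the prolongation claim, fix $\gamma \in \Gamma_f = \mathcal{E}_f$ and $n \geq 1$. Combining $f^{n+m}\gamma = \gamma_{n+m}f^{n+m}$ with $f^{n+m}\gamma = f^m(f^n \gamma) = f^m \gamma_n f^n$ yields $f^m \gamma_n = \gamma_{n+m} f^m$ on the Zariski-dense image of $f^n$, and this polynomial identity extends to all of $\mathbb{C}^2$. Using the arithmetic identity $l_{n+m} = l_n \delta^m + l_m d^n$ together with $p \sigma = \sigma^{\delta} p$, one checks that $\gamma_{n+m}$ coincides with the $m$-th prolongation $(\gamma_n)_m$, whence $\gamma_n \in \mathcal{E}_f = \Gamma_f$. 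The main obstacle will be this last identification: in cases (i) and (ii), normal form makes $\zeta = 0$ and $\zeta_z \equiv 0$, so $\gamma_n(z, w) = (\mu^{\delta^n} z, \mu^{l_n}\nu^{d^n} w)$ lies unambiguously in $S$ and the prolongation identity drops out immediately; in case (iii), $\zeta_z$ may be a nontrivial rational function, and one must verify both that $\gamma_n$ is still a polynomial automorphism in $S$ and that the prolongation correctly matches $\gamma_{n+m}$, which requires careful bookkeeping of the rational terms $\zeta_{p^n(z)}$ and $\zeta_{p^n(\sigma(z))}$ under iteration.
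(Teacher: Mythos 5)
Your proposal is correct and follows the route the paper intends: the paper gives no explicit proof of this corollary beyond the sentence ``Combining Propositions \ref{fun-eq, b = z^l} and \ref{inverse}, we get \dots'', and your verification that each of the three hypotheses feeds both propositions is exactly the intended combination --- the remark after Proposition \ref{fun-eq, b = z^l} for case $(i)$, the fact that normal form forces $b_{d-1}\equiv 0$ in cases $(i)$--$(ii)$, and $b_d^{-1}(0)\cap J_p=\emptyset$ (since $J_p=S^1$ while $z^l$ vanishes only at the origin) in case $(iii)$. The closedness-in-the-torus argument for compactness is also the standard one.

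The one place you stop short --- the ``careful bookkeeping'' for the prolongation claim in case $(iii)$ --- does close up, and it is worth saying why so that the argument is complete. Since $p\sigma=\sigma^{\delta}p$ gives $p^{n}\sigma=\sigma^{\delta^{n}}p^{n}$, one has $\zeta_{p^{n}\sigma(z)}=\zeta_{\sigma^{\delta^{n}}(p^{n}(z))}$; writing $z'=p^{n}(z)$, the fiber map of $\gamma_{n}$ reads $w'\mapsto\mu^{l_{n}}\nu^{d^{n}}\bigl(w'-\zeta_{z'}\bigr)+\zeta_{\sigma^{\delta^{n}}(z')}$, i.e.\ $\gamma_{n}$ is precisely $\gamma_{\mu^{\delta^{n}},\,\mu^{l_{n}}\nu^{d^{n}}}$ in the notation of Proposition \ref{centroid}. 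That it lies in $S$ (polynomiality of the additive part) is forced by the already-established identity $f^{n}\gamma=\gamma_{n}f^{n}$: the left-hand side is a polynomial map and $\mu^{l_{n}}\nu^{d^{n}}Q_{z}^{n}(w)$ is polynomial, so the remaining $z$-dependent term $\zeta_{p^{n}\sigma(z)}-\mu^{l_{n}}\nu^{d^{n}}\zeta_{p^{n}(z)}$ must be a polynomial as well. Once $\gamma_{n}$ is identified with the torus point $(\mu^{\delta^{n}},\mu^{l_{n}}\nu^{d^{n}})$, your identity $l_{n+m}=\delta^{n}l_{m}+d^{m}l_{n}$ (equivalently the symmetric form you wrote) gives $(\gamma_{n})_{m}=\gamma_{n+m}$, and the density argument yields $\gamma_{n}\in\mathcal{E}_{f}=\Gamma_{f}$. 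With this identification made explicit, your proof is complete.
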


\subsection{Examples}

Let us provide some examples 
of the groups of symmetries. 
For a map $f$ of the four examples below,
the symmetries have to satisfy the equation in
Corollary {\rmfamily \ref{cor of fun-eq, b = z^l}}.
Moreover,
by Corollary {\rmfamily \ref{Gamma = F}},
the pair of two numbers 
satisfying the equation in
Corollary {\rmfamily \ref{cor of fun-eq, b = z^l}}
belongs to $\Gamma_f$ if and only if
it satisfies the infinitely many equations 
in $\mathcal{E}_f$.

\begin{example}
Let $f(z,w) = (z^3, z w^2 + z)$.
Then $\Gamma_f = \{ ( \mu ,\nu ) : \mu^2 = \nu^2 = 1 \} 
= \{ (1,1), (-1$, $-1), (1,-1), (-1,1) \}$.
Moreover,
let $g(z,w) = (z^3, zw^2 + 2 z^2 w + z)$.
Then it is conjugate to $f$ by $h(z,w)=(z, w - z):$ 
$hf = gh$.
Hence $\Gamma_g = \{ (z,w), (-z,-w), (z,-w-2z), (-z,w+2z) \}$.
\end{example}

\begin{example}\label{ex2} 
Let $f(z,w) = (z^2 - 1, z^2 w^2)$.
Then $\Gamma_f = \{ \pm 1 \} \times S^1$.
\end{example}

\begin{example}\label{ex3} 
Let $f(z,w) = (z^3, z w^2 + z^3)$.
Then $\Gamma_f = \{ ( \mu ,\nu ) : \mu^2 = \nu^2 \in S^1 \}$.
Moreover, $f$ is semiconjugate to $f_0 (z,w) = (z^3, w^2 + 1)$
by $\pi (z, w) = (z, zw):$  
$\pi f_0 = f \pi$.
\end{example}

\begin{example}\label{ex4} 
Let $f(z,w) = (z^2, z^3 w^5 + z w^3 + w^2)$.
Then $\Gamma_f = \{ ( \mu ,\nu ) : \mu = \nu^{-1} \in S^1 \}$.
Moreover, $f$ is semiconjugate to $f_0 (z,w) = (z^2, w^5 + w^3 + w^2)$
by $\pi (z, w) = (z, z^{-1} w):$ 
$\pi f_0 = f \pi$.
\end{example}

Although the map $f$ below does not satisfy any condition 
in Corollary {\rmfamily \ref{Gamma = F}},
it again follows that $\Gamma_f = \mathcal{E}_f$.

\begin{example}\label{ex5} 
Let $f(z,w) = (z^2, (z^l - 1) w^2)$, where $l \geq 1$.
It follows from Proposition {\rmfamily \ref{inverse}}
that $\Gamma_f \supset \{ \mu^l = 1 \} \times S^1$.
Let $\sigma (z) = \mu z$ be the first component of $\gamma$ in $\Gamma_f$. 
Lemma {\rmfamily \ref{sym of b_d}} 
implies that $|\mu^l z^l - 1| = |z^l - 1|$
on a dense subset of $J_p = S^1$.
Hence $\mu^l = 1$ 
and so $\Gamma_f = \{ \mu^l = 1 \} \times S^1$.
\end{example}

In particular, 
the groups of symmetries of Examples 
from {\rmfamily \ref{ex2}} to {\rmfamily \ref{ex5}}
are infinite.

\section{Infinite symmetries}

In this section,
we classify the polynomial skew products
whose Julia sets have infinitely many symmetries.
We first show that these maps in normal form
are classified into four types
in Section 5.1.
We then remove the assumption of normality
and show that the normalized rational skew products of these maps
are also classified into four types
in Section 5.2.

Toward the classification,
we prepare a lemma.

\begin{lemma}\label{semi-conj} 
Let $q(z,w)$ be a polynomial.
If there exist nonzero integers $r$ and $s$ and positive integer $\delta$
such that $q(z^r, z^s w) = z^{s \delta} q(1,w)$,
then $f(z,w) = (z^{\delta}, q(z,w))$ is semiconjugate to 
$f_0(z,w) = (z^{\delta}, q(1,w))$ by $\pi (z,w)=(z^r, z^s w):$ 
$\pi f_0 = f \pi$.
Moreover, 
$q$ consists of the terms of the form $c_j z^{n_j} w^j$,
where $c_j$ is a constant and $n_j = (\delta - j)s/r$
for $0 \leq j \leq \deg q$.
\end{lemma}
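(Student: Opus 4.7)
The plan is to split the lemma into its two separate claims and dispatch each directly from the hypothesis. The first claim (the semiconjugacy) is verified by a one-line substitution: both $\pi f_0(z,w)$ and $f\pi(z,w)$ have first component $z^{r\delta}$, while their second components are $z^{s\delta} q(1,w)$ and $q(z^r, z^s w)$ respectively, which coincide precisely by the assumed functional equation $q(z^r, z^s w) = z^{s\delta} q(1,w)$. There is no subtlety here.

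For the structural claim, I would expand
\[
q(z,w) = \sum_{j=0}^{\deg q} c_j(z)\, w^j, \qquad c_j(z) \in \mathbb{C}[z],
\]
substitute into the hypothesis, and compare coefficients of $w^j$ on both sides. This produces the identity $c_j(z^r) = c_j(1)\, z^{s(\delta - j)}$ for every $j$. Since the right-hand side is a single monomial in $z$, the left-hand side must be as well; writing $c_j$ as a sum of its monomial components and equating exponents directly forces $c_j$ to be either zero or a scalar multiple of $z^{n_j}$ with $n_j = s(\delta-j)/r$. Because $q$ is assumed polynomial, $n_j$ must be a nonnegative integer whenever $c_j \neq 0$, and then $c_j = c_j(1)$ is simply a constant. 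This yields the description stated in the lemma.

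The argument is essentially bookkeeping, with no real obstacle. The only point calling for a little care is the passage from $c_j(z^r) = c_j(1)\, z^{s(\delta - j)}$ to $c_j(z) = c_j(1)\, z^{n_j}$: since the integers $r$ and $s$ are only assumed nonzero and may be negative, I would avoid a formal substitution $z \mapsto z^{1/r}$ and instead match monomials coefficient by coefficient, which handles all sign combinations uniformly.
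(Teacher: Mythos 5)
Your proposal is correct and follows the same route the paper intends: the paper states the lemma without a formal proof, remarking only that the semiconjugacy is a direct substitution and that $n_j=(\delta-j)s/r$ follows from the identity $rn_j+sj=s\delta$, which is exactly the coefficient-of-$w^j$ matching you carry out. Your extra care about negative $r$ or $s$ (matching monomials in the Laurent-polynomial identity rather than substituting $z\mapsto z^{1/r}$) is a sensible way to make the bookkeeping rigorous and is consistent with the paper's later use of the lemma for rational $\tilde q$.
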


The former statement of
this lemma holds even if $q$ is a rational function$;$
we apply this lemma 
for the normalized rational skew products in Section 5.2.
The equation $n_j = (\delta - j)s/r$ follows from
the identity $r n_j + s j = s \delta$.

\subsection{Classification of the maps in normal form}

We first assume that polynomial skew products are in normal form. 

\begin{theorem}\label{infinite, normal} 
Let $f$ be in normal form.
Then $\Gamma_f$ is infinite if and only if one of the following holds$:$
\begin{enumerate} [(i)]
\item $f(z,w)=(z^{\delta}, z^l w^d)$,
\item $f(z,w)=(z^{\delta}, q(w))$,
\item $f(z,w)=(p(z), b_d (z) w^d)$, or
\item $f(z,w)=(z^{\delta}, q(z,w))$
      and it is semiconjugate to $(z^{\delta}, q(1,w))$ 
      by $\pi (z,w)$ $=$ $(z^r, z^s w)$
      for some nonzero coprime integers $r$ and $s$.
      If $l = 0$, then $\delta = d$ and $s / r > 0$.
      If $l \neq 0$, then $\delta \neq d$ and
      $s / r = l / (\delta - d)$.
\end{enumerate}
To avoid overlap,
we assume that $q(w) \neq w^d$ in (ii),
$p(z) \neq z^{\delta}$ or $b_d(z) \neq z^l$ in (iii),
and $q(z,w) \neq b_d (z) w^d$ in (iv).
Each condition is equivalent to the following$:$
\begin{enumerate} [(i)']
\item $J_f = S^1 \times S^1$,
\item $J_f = S^1 \times J$,
\item $J_f = \bigcup_{z \in J_p} \{ z \} \times C_z$,
\item $J_f = \bigcup_{z \in S^1} \{ z \} \times z^{s/r} J$,
\end{enumerate}
for a one-dimensional Julia set $J$ which is not $S^1$
and for a circle $C_z$ about the origin which can be empty.
More precisely, $J$ in (ii)' is $J_q;$ 
a circle $C_z$ in (iii)' is $\{ w : \log |w| = - \Phi (z) \};$
and $J$ in (iv)' is $J_{q(1,w)}$.
To avoid overlap, we assume in (iii)' that 
$J_p$ is not $S^1$ or $C_z$ is not all the same over $J_p$.
Note that $J_f$ in (i)', (ii)' or (iv)' is compact. 
Or equivalently,
\begin{enumerate} [(i)'']
\item $\Gamma_f = S^1 \times S^1$,
\item $\Gamma_f = S^1 \times \Sigma$,
\item $\Gamma_f = \Sigma \times S^1$,
\item $\Gamma_f = \{ (\mu, \nu) \in S^1 \times S^1 : \mu^{as} = \nu^{ar} \}$,
\end{enumerate}
for a finite group $\Sigma$ in $S^1$
and for an integer $a \neq 0$.
In particular, $\Gamma_f$ is compact.
More precisely, $\Sigma$ in (ii)'' is $\Sigma_q$,
and $\Sigma$ in (iii)'' is a subgroup of $\Sigma_p$ 
which includes $\Sigma_p \cap \{ \sigma (z) = \mu z : b_d(\mu z) = \mu^l b_d(z) \}$. 
\end{theorem}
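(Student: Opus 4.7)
My plan is to prove the theorem in three steps: (I) reduce the infiniteness of $\Gamma_f$ to one of the four forms (i)--(iv); (II) verify that each form makes $\Gamma_f$ infinite while computing $\Gamma_f$ in the shape (i)''--(iv)''; and (III) translate the algebraic forms into the geometric descriptions (i)'--(iv)' via the fiberwise Green functions. A preliminary observation is that $\Gamma_f$ is a closed subgroup of $S^1 \times S^1$: a uniform limit of automorphisms preserving the closed set $J_f$ again preserves $J_f$. If $\Gamma_f$ is infinite, its identity component $\Gamma_f^0$ is positive-dimensional, hence equal to one of $S^1 \times S^1$, $S^1 \times \{1\}$, $\{1\} \times S^1$, or the twisted circle $\{(\mu, \nu) : \mu^s = \nu^r\}$ for coprime integers $r, s \neq 0$.

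For step (I) I would analyze these four possibilities. Whenever $\Gamma_f^0$ has $S^1$ as its first projection, $\Sigma_p = S^1$ and hence $p(z) = z^{\delta}$ by Proposition {\rmfamily \ref{poly-normal, infinite}}; Lemma {\rmfamily \ref{sym of b_d}} together with continuity forces $|b_d|$ to be constant on $S^1$, giving the monomial $b_d(z) = z^l$ in normal form; Corollary {\rmfamily \ref{cor of fun-eq, b = z^l}} then yields the key identity $q(\mu z, \nu w) = \mu^l \nu^d q(z,w)$ throughout $\Gamma_f^0$. Reading off this identity monomial-by-monomial: on $S^1 \times S^1$ only $z^l w^d$ survives, giving case (i); on $S^1 \times \{1\}$ the $n=1$ equation gives $q(z,w) = z^l \tilde{q}(w)$, and the $n=2$ consequence of $\gamma \in \mathcal{E}_f = \Gamma_f$ (available via Corollary {\rmfamily \ref{Gamma = F}}) forces $\tilde{q}(w) = w^d$ unless $l = 0$, yielding case (i) or (ii); on the twisted circle the surviving monomials of $q$ are indexed by an integer $m$ via $(j,k) = (l + sm, d - rm)$, precisely matching the hypothesis of Lemma {\rmfamily \ref{semi-conj}}. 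The necessary compatibility $rl = s(\delta - d)$ between the slope of $\Gamma_f^0$ and the data $\delta, d, l$ of $f$ is forced by the fact that $\gamma \mapsto \gamma_1$ maps the connected group $\Gamma_f^0 \subset \Gamma_f = \mathcal{E}_f$ into itself, yielding case (iv). In the remaining case $\Gamma_f^0 = \{1\} \times S^1$, every fiberwise $K_z$ for $z \in J_p$ is rotationally invariant about the origin and hence a closed disk, so the relation $G_{p(z)}(q_z(w)) = d G_z(w)$ forces $|q_z(w)|$ to depend only on $|w|$ outside $K_z$; from this one reads off $q_z(w) = b_d(z) w^d$, i.e., case (iii).

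For step (II), each of (i)--(iv) is verified directly: Corollary {\rmfamily \ref{Gamma = F}} or Proposition {\rmfamily \ref{inverse}} reduces the determination of $\Gamma_f$ to that of $\mathcal{E}_f$, and then a monomial check on the functional equations produces the subgroups of $S^1 \times S^1$ listed in (i)''--(iv)'' (with the integer $a$ in (iv)'' accounting for discrete cosets beyond the identity component). Step (III) describes $J_f$ using the fiberwise B\"{o}ttcher coordinate $\varphi_z$ and the asymptotic $G_z(w) = \log|w| + \Phi(z) + o_z(1)$; in case (iv) the semiconjugation $\pi(z,w) = (z^r, z^s w)$ transports the one-dimensional Julia set $J_{q(1,w)}$ onto the fibers. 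The main obstacle will be case (iv): deriving the slope relation $s/r = l/(\delta - d)$, packaging the surviving terms into Lemma {\rmfamily \ref{semi-conj}}, and describing the fiber Julia set as $z^{s/r} J_{q(1,w)}$ in spite of the multivalued fractional power. Case (iii) also requires some care, since $J_f$ need not be compact, so the equivalence of the algebraic, geometric, and group-theoretic formulations must be argued without relying on compactness.
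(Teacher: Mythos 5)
Your overall strategy coincides with the paper's: split into cases according to which factor of the symmetry group is infinite, use Lemma~\ref{sym of b_d} to force $b_d(z)=z^l$ once the first projection is infinite, read off the monomial constraints from Corollary~\ref{cor of fun-eq, b = z^l}, derive the slope relation $rl=s(\delta-d)$ from the fact that $\gamma_1$ again lies in $\Gamma_f=\mathcal{E}_f$, and invoke Lemma~\ref{semi-conj} and Proposition~\ref{poly, order} to obtain (iv) and the integer $a$. Your algebraic derivation of case (ii) from the $n=1$ and $n=2$ functional equations is a legitimate variant of the paper's more geometric argument (that $\partial K_z$ is independent of $z$ and the fiber maps differ only by the finitely many symmetries of $J$), and it does check out.

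However, there is a genuine gap in your preliminary observation, and it is load-bearing. You assert that $\Gamma_f$ is a closed subgroup of $S^1\times S^1$ because ``a uniform limit of automorphisms preserving the closed set $J_f$ again preserves $J_f$,'' and you then base the entire case division on the identity component $\Gamma_f^0$ being a positive-dimensional closed connected subgroup of the torus. But $J_f$ need not be closed: as the paper records after Theorem~\ref{infinite, normal} (citing Favre--Guedj), for maps of type (iii) with $b_d^{-1}(0)\cap J_p\neq\emptyset$ the closure of $J_f$ is all of $J_p\times\mathbb{C}$, so $J_f\subsetneq\overline{J_f}$. Moreover, even granting pointwise limits land in $\overline{J_f}$, the fibers $\partial K_z$ do not vary continuously in $z$, so a limit of symmetries $(\mu_n,\nu_n)\to(\mu,\nu)$ need not satisfy $\nu\,\partial K_z=\partial K_{\mu z}$ when $\mu_n\neq\mu$. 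If $\Gamma_f$ were an infinite non-closed subgroup (e.g.\ generated by a single irrational pair), its identity component would be trivial and your four cases would not cover it. The paper avoids this by splitting on the finiteness of the \emph{projections} $\pi_z(\Gamma)$ and $\pi_w(\Gamma)$ --- a purely group-theoretic dichotomy --- and then establishing the needed closure statements case by case: when $\pi_z(\Gamma)$ is finite, an infinite subgroup must meet $\{1\}\times S^1$ in a dense set, and the closedness of each individual fiber $\partial K_z$ (not of $J_f$) upgrades this to $\{1\}\times S^1\subset\Gamma$; when $\pi_z(\Gamma)$ is infinite, one first deduces $p(z)=z^\delta$ and $b_d(z)=z^l$, after which Corollary~\ref{Gamma = F}(iii) gives $\Gamma_f=\mathcal{E}_f$ and hence compactness, which is then used in the remaining subcases. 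You should replace your closedness claim with this case-by-case argument; the rest of your outline then goes through essentially as in the paper.
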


\begin{proof} 
The main part of the proof is to show that 
if $\Gamma_f$ is infinite,
then $f$ is one of the forms from $(i)$ to $(iv)$.
For the equivalence of the conditions
in terms of the form of $f$ and
of the shapes of $J_f$ and $\Gamma_f$,
it is enough to show that $(iv)$ implies $(iv)$'',
which is mentioned at the end of this proof. 

Assume that $\Gamma = \Gamma_f$ is infinite.
Let $\pi_z$ and $\pi_w$ be the projections
to the $z$ and $w$ coordinates, respectively.
Then we have three cases:
$\pi_z (\Gamma)$ is finite, $\pi_w (\Gamma)$ is finite,
or $\pi_z (\Gamma)$ and $\pi_w (\Gamma)$ are both infinite.

We first show that $\pi_z (\Gamma)$ being finite induces $(iii)$.
If $\pi_z (\Gamma)$ is finite, 
then $\Gamma$ contains $\{ 1 \} \times S^1$
since it is an infinite subgroup of $S^1 \times S^1$
and since $\partial K_z$ is closed  for any $z$ in $J_p$.
Thus $\partial K_z$ is a circle about the origin for any $z$ in $J_p^*$. 
Because $q_z$ maps this circle to another circle,
one can show that $q_z(w) = b_d(z) w^d$ on $J_p^* \times \mathbb{C}$
by using the fiberwise B\"{o}ttcher function $\varphi_z$,
which implies $(iii)$.

Assume that $\pi_z (\Gamma)$ is infinite from now on.
Then $p(z) = z^{\delta}$ and $J_p = S^1$. 
Moreover, we show that $b_d (z) = z^l$.
It follows from Lemma {\rmfamily \ref{sym of b_d}} 
that $|b_d (\mu z)| = |b_d (z)|$ 
for any $z$ in $J_p^* \cap \sigma^{-1} J_p^*$ and 
for any $\mu$ in a dense subset $\pi_z (\Gamma)$ of $S^1$.
Thus $b_d$ maps $S^1$ to a circle about the origin.
Therefore, $b_d (z) = z^l$. 
In particular, $J_f$ and $\Gamma_f$ are compact.

We next show that $\pi_w (\Gamma)$ being finite induces $(ii)$.  
If $\pi_w (\Gamma)$ is finite, 
then $\Gamma$ contains $S^1 \times \{ 1 \}$
since it is compact.
Thus $\partial K_z$ is independent of $z$ in $J_p$;
it coincides with the Julia set $J$ of a polynomial.
Hence one can show that
the polynomials $q_z$ on fibers over $J_p$ 
differ only in terms of the symmetries of $J$
by using $\varphi_z$.
Because the number of the symmetries of $J$ is finite
and because $J_p$ is connected,
the polynomials are all the same,
which implies $(ii)$.

Now let $\pi_w (\Gamma)$ also be infinite. 
If $q(z,w) = z^l w^d$, then we get $(i)$. 
Let $q \neq z^l w^d$ and 
$z^{n_j} w^{m_j}$ be a term of $q$ 
with a nonzero coefficient for $m_j \leq d$.
Then it follows from Corollary {\rmfamily \ref{cor of fun-eq, b = z^l}}
that $\mu^{n_j} \nu^{m_j} = \mu^{l} \nu^{d}$ 
for any symmetry $(\mu, \nu)$ in $\Gamma$.
The equation $\mu^{n_j - l} = \nu^{d - m_j}$ implies 
that if $m_j = d$ then $n_j = l$
since we may assume that $\mu^n \neq 1$ for any integer $n \neq 0$.
Let $m_j < d$ from now on. 
The equations $\mu^{n_j - l} = \nu^{d - m_j}$ 
and $\mu^{n_i - l} = \nu^{d - m_i}$ imply that 
$\mu^{(n_j - l)(d - m_i)} = \mu^{(n_i - l)(d - m_j)}$,
which implies that $(n_j - l)(d - m_i) = (n_i - l)(d - m_j)$.
Therefore, the ratio of $n_j - l$ and $d - m_j$ does not depend on $j$.
Let $s$ and $r$ be nonzero coprime integers 
whose ratio is equal to that of $n_j - l$ and $d - m_j$;
\[
\dfrac{n_j - l}{d - m_j} = \dfrac{n_i - l}{d - m_i} =: \dfrac{s}{r}.
\]
Then $q(z^r, z^s w) = z^{rl + sd} q(1,w)$.

Moreover, 
we can obtain the identity $rl + sd = s \delta$ as follows.
Let $a_j$ be an integer such that 
$n_j - l = a_j s$ and $d - m_j = a_j r$,
and let $a$ be the greatest common divisor of $a_j$.
By Corollary {\rmfamily \ref{Gamma = F}},
$(1, \mu^{l} \nu^{d - \delta}) 
= (\mu^{\delta}, \mu^{l} \nu^{d}) - (\mu^{\delta},\nu^{\delta})$ 
belongs to $\Gamma$ for any $(\mu, \nu)$ in $\Gamma$.
Since $\Gamma \subset \{ \mu^{as} = \nu^{ar}  \}$, we have 
$(\mu^{l} \nu^{d - \delta})^{ar} =1$.
Therefore, 
$\mu^{a \{ rl + s(d - \delta) \}} = 1$  
and so $rl + s(d - \delta) = 0$.
Consequently, 
$q(z^r, z^s w) = z^{s \delta} q(1,w)$,
which implies $(iv)$ as in Lemma {\rmfamily \ref{semi-conj}}.
More precisely, 
it follows from Proposition {\rmfamily \ref{poly, order}}
that the order of $\Sigma_{q(1,w)}$ is equal to the absolute value of $ar$.
Hence $\Gamma = \{ \mu^{as} = \nu^{ar}  \}$.
\end{proof}

Favre and Guedj showed in \cite[Proposition 6.5]{fg} that, 
for a map $f$ of the form $(iii)$,
if $b^{-1}(0) \cap J_p \neq \emptyset$
then $G_z$ restricted to $J_p \times \mathbb{C}$
is everywhere discontinuous,
and the closure of $J_f$ coincides with $J_p \times \mathbb{C}$.
Hence the groups of the symmetries of $J_f$ and of its closure
may be different. 

\subsection{Classification of normalized rational skew products}

We saw in Section 4.2 that 
the birational map $h$ conjugates $f$ to
the normalized rational skew product $\tilde{f}$: 
$h f = \tilde{f} h$.
Note that $h$ also conjugates a symmetry $\gamma$,
which corresponds to $\mu$ and $\nu$,
to $\tilde{\gamma} (z,w) = (\mu z, \nu w)$.
Let $\tilde{f} (z,w) = (\tilde{p} (z), \tilde{q} (z,w))$ and 
$\tilde{q} (z,w) = \tilde{b}_d (z) w^d 
+ \tilde{b}_{d - 1} (z) w^{d - 1} + \cdots + \tilde{b}_0 (z)$.
Then $\tilde{p}$ and $\tilde{b}_d$ are polynomial 
and $\tilde{b}_{d - 1} \equiv 0$.

\begin{theorem}\label{infinite, normalized}
Let $\Gamma_f$ be infinite. 
Then the rational map $\tilde{f}$ is one of the following$:$
\begin{enumerate} [(i)]
\item $\tilde{f} (z, w) = (z^{\delta}, z^l w^d)$,
\item $\tilde{f} (z, w) = (z^{\delta}, \tilde{q} (w))$,
\item $\tilde{f} (z, w) = (\tilde{p} (z), \tilde{b}_d (z) w^d)$, 
\item $\tilde{f} (z, w) = (z^{\delta}, \tilde{q} (z,w))$
      and it is semiconjugate to $(z^{\delta}, \tilde{q} (1,w))$ 
      by $\pi (z,w)$ $=$ $(z^r, z^s w)$
      for some nonzero coprime integers $r$ and $s$.
      If $l = 0$, then $\delta = d$ and $s / r > 0$.
      If $l \neq 0$, then $\delta \neq d$ and
      $s / r = l / (\delta - d)$.
\end{enumerate}
Except the case $s/r < 0$ in $(iv)$, 
the map $\tilde{f}$ is polynomial. 
To avoid overlap,
we assume that $\tilde{q} (w) \neq w^d$ in (ii),
$\tilde{p} (z) \neq z^{\delta}$ or $\tilde{b}_d(z) \neq z^l$ in (iii),
and $\tilde{q} (z,w) \neq \tilde{b}_d (z) w^d$ in (iv).
\end{theorem}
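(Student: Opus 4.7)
The plan is to reduce to Theorem \ref{infinite, normal} by analyzing the normalized rational skew product $\tilde{f} = hfh^{-1}$ in place of $f$. Conjugation by $h$ carries $\Gamma_f$ bijectively onto a group $\tilde{\Gamma}$ of rotational automorphisms $\tilde{\gamma}(z,w) = (\mu z, \nu w)$ preserving $h(J_f)$, so $\tilde{\Gamma}$ is infinite whenever $\Gamma_f$ is. Although $\tilde{f}$ is in general only rational (its coefficients $\tilde{b}_j$ for $0 \leq j \leq d-2$ are rational in $z$), the normal-form conditions hold by construction: $\tilde{p}$ is monic with vanishing subleading coefficient, $\tilde{b}_d$ is monic, and $\tilde{b}_{d-1} \equiv 0$. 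The claim is that the arguments of Theorem \ref{infinite, normal} can be replayed almost verbatim on $\tilde{f}$.

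First I would verify that the Section 4 tools transfer to $\tilde{f}$. The fiberwise Green and B\"ottcher functions $\tilde{G}_z$, $\tilde{\varphi}_z$ exist off the indeterminacy locus of $h$, and the analogs of Lemma \ref{sym of b_d} and Corollary \ref{cor of fun-eq, b = z^l} hold for $\tilde{f}$ and $\tilde{\Gamma}$: on a dense subset of $\tilde{J}_{\tilde{p}}^*$ one has $|\tilde{b}_d(\mu z)| = |\tilde{b}_d(z)|$, and the functional equation $\tilde{q}(\mu z, \nu w) = \mu^l \nu^d \tilde{q}(z,w)$ holds on $\mathbb{C}^2$ for each $(\mu, \nu) \in \tilde{\Gamma}$. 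These identities are obtained by the same density and identity-theorem arguments as in Section 4, with the additional bookkeeping of excluding the poles of the rational $\tilde{b}_j$ from the extension sets.

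Next I would run the three-case split of Theorem \ref{infinite, normal} on $\tilde{f}$ and $\tilde{\Gamma}$, according as $\pi_z(\tilde{\Gamma})$ and $\pi_w(\tilde{\Gamma})$ are finite or infinite. The case $\pi_z(\tilde{\Gamma})$ finite yields type (iii). The case $\pi_z(\tilde{\Gamma})$ infinite forces $\tilde{p}(z) = z^{\delta}$ and, via the transferred Lemma \ref{sym of b_d}, $\tilde{b}_d(z) = z^l$; a sub-split on $\pi_w(\tilde{\Gamma})$ then gives type (ii) or the monomial analysis leading to (i)/(iv). For the latter, each monomial $c_j z^{n_j} w^{m_j}$ of $\tilde{q}$ satisfies $\mu^{n_j - l} = \nu^{d - m_j}$ for all $(\mu, \nu) \in \tilde{\Gamma}$, whose infinitude forces the ratio $(n_j - l)/(d - m_j) = s/r$ to be independent of $j$; Lemma \ref{semi-conj} then supplies the semi-conjugacy. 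The main obstacle is handling the rationality of $\tilde{q}$ in $z$: the exponents $n_j$ may now be negative integers, which is exactly what permits $s/r < 0$ and corresponds to $\tilde{f}$ failing to be polynomial. Once this is accounted for, the identity $rl + s(d - \delta) = 0$ and the classification into the four types follow as in Theorem \ref{infinite, normal}.
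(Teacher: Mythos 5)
Your proposal follows essentially the same route as the paper: conjugate by $h$, observe that the symmetries become rotational products $(\mu z,\nu w)$, transfer Lemma \ref{sym of b_d} and Corollary \ref{cor of fun-eq, b = z^l} to $\tilde f$, and rerun the three-case split of Theorem \ref{infinite, normal} with the sole new feature that the exponents $n_j$ may be negative, which is exactly what the paper does (its own proof is explicitly only an outline). The one point you treat only in passing is the final polynomiality claim, where the paper argues case by case --- in particular, in case (ii) it deduces that $\zeta_z$, hence $h$, is polynomial, and in case (iv) it notes $s/r>0$ forces $n_j=(\delta-m_j)s/r\geq 0$ --- but your remark about negative exponents captures the substance of this.
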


\begin{proof} 
The proof of the former statement is similar to that of 
Theorem {\rmfamily \ref{infinite, normal}};
we only give the outline.
Let $\Gamma = \Gamma_f$ be infinite.
If $\pi_z (\Gamma)$ is finite,
then $\Gamma \supset \{ 1 \} \times S^1$
and so $\tilde{q} (z,w) = \tilde{b}_d (z) w^d$.
If $\pi_z (\Gamma)$ is infinite,
then $\tilde{p} (z) = z^{\delta}$ and $\tilde{b}_d (z) = z^l$.
In particular, $J_f$ and $\Gamma_f$ are compact
and $h$ is homeomorphic on $J_p \times \mathbb{C}$
since $b_d^{-1} (0) \cap J_p = \emptyset$.
If $\pi_w (\Gamma)$ is finite,
then $\Gamma \supset S^1 \times \{ 1 \}$
and so $\tilde{q} (z,w) = \tilde{q} (w)$.
We now show that 
$\pi_z (\Gamma)$ and $\pi_w (\Gamma)$ being infinite implies $(iv)$, 
assuming $\tilde{q} \neq z^l w^d$.
In this case, 
$\tilde{q} (z,w) = z^l w^d + \sum c_j z^{n_j} w^{m_j}$,
where $c_j$ is a nonzero coefficient,  
$n_j$ can be negative, and $0 \leq m_j \leq d$. 
Because Corollary {\rmfamily \ref{cor of fun-eq, b = z^l}}
still holds for $\tilde{f}$,
we find that $\tilde{q} (\mu z, \nu w) = \mu^l \nu^d \tilde{q} (z,w)$ 
for any $(\mu, \nu)$ in $\Gamma_f$.
The same argument as in the proof of 
Theorem {\rmfamily \ref{infinite, normal}}
implies that 
$\tilde{q} (z^r, z^s w) = z^{s \delta} \tilde{q} (1,w)$,
which implies $(iv)$ as in Lemma {\rmfamily \ref{semi-conj}}.

Let us show the later statement. 
If $\tilde{f}$ is the form of $(i)$ or $(iii)$, then it is clearly polynomial.
In the case $(ii)$, 
it follows from the commutative diagram $h f = \tilde{f} h$ that 
$\zeta_z$ is a polynomial. 
Hence $h$ is polynomial and so is $\tilde{f}$. 
Let $\tilde{f}$ be the form of $(iv)$.
Then the identity $r n_j + s m_j = s \delta$
implies that $n_j = (\delta - m_j)s/r$.
If $s/r > 0$, then $\delta \geq d$ and so $n_j > 0$.
\end{proof}

As Theorem {\rmfamily \ref{infinite, normal}} states,
these conditions in terms of the form of $\tilde{f}$
can be replaced with the shape of $J_{\tilde{f}}$ or $\Gamma_f$.

\end{document}